\author[C.~Sanna]{Carlo Sanna$^\dagger$}
\thanks{$\dagger\,$C.~Sanna is a member of GNSAGA of INdAM and of CrypTO, the group of Cryptography and Number~Theory of Politecnico di Torino}
\address{\parbox{\linewidth}{
Politecnico di Torino, Department of Mathematical Sciences\\
Corso Duca degli Abruzzi 24, 10129 Torino, Italy\\[-8pt]}}
\email{carlo.sanna.dev@gmail.com}
\keywords{Lucas sequence; rank of appearance}
\subjclass[2010]{Primary: 11B39, Secondary: 11N05, 11N37}
\title{On the divisibility of the rank of appearance of a Lucas~sequence}
\newtheorem{thm}{Theorem}[section]
\newtheorem{lem}[thm]{Lemma}
\theoremstyle{remark}
\newcommand{\Li}{\operatorname{Li}}
\newcommand{\id}{\operatorname{id}}
\newcommand{\Gal}{\operatorname{Gal}}
\newcommand{\disc}{\operatorname{disc}}
\begin{document}

\maketitle

\begin{abstract}
Let $U = (U_n)_{n \geq 0}$ be a Lucas sequence and, for every prime number $p$, let $\rho_U(p)$ be the rank of appearance of $p$ in $U$, that is, the smallest positive integer $k$ such that $p$ divides $U_k$, whenever it exists.
Furthermore, let $d$ be an odd positive integer.
Under some mild hypotheses, we prove an asymptotic formula for the number of primes $p \leq x$ such that $d$ divides $\rho_U(p)$, as $x \to +\infty$.
\end{abstract}

\section{Introduction}

Let $(U_n)_{n \geq 0}$ be a Lucas sequence, that is, a sequence of integers satisfying $U_0 = 0$, $U_1 = 1$, and $U_n = a_1 U_{n - 1} + a_2 U_{n - 2}$ for every integer $n \geq 2$, where $a_1, a_2$ are fixed nonzero integers.
The \emph{rank of appearance} of a prime number $p$, denoted by $\rho_U(p)$, is the smallest positive integer $k$ such that $p \mid U_k$.
It can be easily seen that $\rho_U(p)$ exists whenever $p \nmid a_2$.
Define
\begin{equation*}
\mathcal{R}_U(d; x) := \#\big\{p \leq x : p \nmid a_2, \, d \mid \rho_U(p) \big\} ,
\end{equation*}
for every positive integer $d$ and for every $x > 1$.

Let $(F_n)_{n \geq 0}$ be the Lucas sequence of Fibonacci numbers, corresponding to \mbox{$a_1 = a_2 = 1$.}
In~1985, Lagarias~\cite{MR789184} (see~\cite{MR1251907} for a correction and~\cite{MR1438482,MR1483692} for generalizations) showed that $\mathcal{R}_F(2; x) \sim \tfrac{2}{3} x$, as $x \to +\infty$.
More recently, Cubre and Rouse~\cite{MR3251719}, settling a conjecture of Bruckman and Anderson~\cite{MR1627443}, proved that $\mathcal{R}_F(d; x) \sim \mathrm{c}(d)\,d^{-1}\prod_{p \mid d}\left(1-p^{-2}\right)^{-1}$, as $x \to +\infty$, for every positive integer $d$, where $\mathrm{c}(d)$ is equal to $1$, $\tfrac{5}{4}$, or $\tfrac1{2}$, whenever $10 \nmid d$, $d \equiv 10 \pmod {20}$, or $20 \mid d$, respectively.

Let $\alpha, \beta$ be the roots of the characteristic polynomial $f_U(X) := X^2 - a_1 X - a_2$, and assume that $\gamma := \alpha / \beta$ is not a root of unity.
Let $\Delta := a_1^2 + 4a_2$ be the discriminant of $f_U(X)$, and let $\Delta_0$ be the squarefree part of $\Delta$.
Assume that $\Delta$ is not a square, so that $K := \mathbb{Q}\big(\!\sqrt{\Delta}\big)$ is a quadratic number field.
Let $h$ be the greatest positive integer such that $\gamma$ is a $h$th power in $K$.

Our result is the following:

\begin{thm}\label{thm:main}
Let $d$ be an odd positive integer with $3 \nmid d$ whenever $\Delta_0 = -3$.
Then, for every $x > \exp\!\big(Be^{8\omega(d)} d^8 \big)$, we have
\begin{equation*}
\mathcal{R}_U(d; x) = \delta_{U}(d) \Li(x) + O_U\!\left(\frac{(\omega(d) + 1)d}{\varphi(d)} \cdot \frac{x\,(\log \log x)^{\omega(d)}}{(\log x)^{9/8}}\right) ,
\end{equation*}
where $B > 0$ is an absolute constant and
\begin{equation*}
\delta_U(d) := \frac1{d}\left(\frac1{(d^\infty, h)} + \eta_U(d)\right) \prod_{p \,\mid\, d} \left(1 - \frac1{p^2}\right)^{-1} ,
\end{equation*}
with $\eta_U(d) := 0$ if $\Delta > 0$ or $\Delta_0 \not\equiv 1 \pmod 4$ or $\Delta_0 \nmid d^\infty$; and
\begin{equation*}
\eta_U(d) := \frac{(d^\infty, h)}{\big[(d^\infty, h), \Delta_0 / (d, \Delta_0)\big]^2}
\end{equation*}
otherwise.
\end{thm}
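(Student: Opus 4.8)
The plan is to reduce the problem to the distribution of Frobenius classes in a family of Kummer extensions and then to invoke an effective Chebotarev density theorem. The starting point is the identity $U_n = (\alpha^n - \beta^n)/(\alpha - \beta)$, which shows that for a prime $p \nmid a_2\Delta$ one has $\rho_U(p) = \ord(\gamma \bmod \mathfrak{p})$, the multiplicative order of $\gamma = \alpha/\beta$ in the residue field of a prime $\mathfrak{p} \mid p$ of $K$ (recall $N_{K/\mathbb{Q}}(\gamma) = 1$, so $\gamma$ lies on the unit circle). Two regimes appear: if $p$ splits in $K$ then this order divides $p - 1$, while if $p$ is inert then $\beta \equiv \alpha^p$ and the order divides $p + 1$. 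Thus $\mathcal{R}_U(d;x)$ counts primes, split or inert, for which $d$ divides the order of the fixed element $\gamma \in K^\times$ modulo $\mathfrak{p}$, and both regimes are captured simultaneously by working with $\Gal(L/\mathbb{Q})$ for suitable $L$ and prescribing the restriction of $\mathrm{Frob}_p$ to $K$.

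Next I would convert ``$d \mid \ord(\gamma \bmod \mathfrak{p})$'' into a finite $\mathbb{Z}$-combination of splitting conditions. For a prime power $\ell^e \mid d$, the event $\ell^e \mid \ord(\gamma \bmod \mathfrak{p})$ depends, through an inequality on the $\ell$-adic valuation of a discrete logarithm, on $v_\ell(p \mp 1)$; I would therefore stratify by the exact power of $\ell$ dividing $p \mp 1$ and express the indicator through the functions $\mathbb{1}_{n,m}(p)$ encoding ``$\zeta_n$ lies in the residue field of $\mathfrak{p}$ and $\gamma$ is an $m$-th power there''. In $\Gal(L_{n,m}/\mathbb{Q})$ with $L_{n,m} := K(\zeta_n, \gamma^{1/m})$ this is a union of conjugacy classes, and imposing no condition on the restriction to $K$ treats split and inert $p$ uniformly. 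Since $d$ is odd, the data attached to distinct primes $\ell \mid d$ live in essentially linearly disjoint extensions, so the indicator of $d \mid \ord$ factors over $\ell \mid d$ and the count assembles by the Chinese Remainder Theorem; this is the source of the multiplicative shape of $\delta_U(d)$ and of the $2^{\omega(d)}$-type blow-up in the bookkeeping.

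The crux is the computation of the degrees $[L_{n,m} : \mathbb{Q}]$, uniformly in $n, m \mid d^{O(1)}$. Generically this degree equals $2\,\varphi(n)\,m$, but two deflations occur. First, because $\gamma$ is an exact $h$-th power in $K$, the Kummer extension $K(\zeta_n, \gamma^{1/m})/K(\zeta_n)$ collapses by a factor $(m, h)$; summing the resulting geometric contributions over the valuation strata produces the factor $1/(d^\infty, h)$ together with the Euler product $\prod_{p \mid d}(1 - p^{-2})^{-1}$. Second, when $\Delta < 0$ and $\Delta_0 \equiv 1 \pmod 4$ one has the entanglement $K = \mathbb{Q}(\sqrt{\Delta_0}) \subseteq \mathbb{Q}(\zeta_{|\Delta_0|})$, so $\mathbb{Q}(\zeta_n) \cap K \neq \mathbb{Q}$ as soon as $\Delta_0 \mid n$; this forces a degree drop precisely when $\Delta_0 \mid d^\infty$, and tracking it over the strata yields the correction $\eta_U(d)$ with its $\big[(d^\infty,h), \Delta_0/(d,\Delta_0)\big]^2$ denominator. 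The hypothesis $3 \nmid d$ when $\Delta_0 = -3$ is exactly what is needed here: in that case $\zeta_3 \in K$, so the roots of unity governing the Kummer theory at $\ell = 3$ already lie in the base field, and the degree formula (hence the shape of $\delta_U$) would be different.

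Finally I would feed the resulting finite linear combination of splitting densities into an unconditional effective Chebotarev density theorem of Lagarias--Odlyzko type, in a form that either exhibits power-saving in $\log x$ or absorbs a possible Siegel zero via the Deuring--Heilbronn phenomenon. After the degree computation, the main term $\sum [L_{n,m}:\mathbb{Q}]^{-1}\Li(x)$ collapses to $\delta_U(d)\Li(x)$. The error is obtained by adding the $O(2^{\omega(d)})$ individual Chebotarev terms, bounding the discriminants of the $L_{n,m}$ (whose logarithms are $\ll_U d\log d$), and truncating the stratification; the interplay of these, together with the $d/\varphi(d)$ factor from the cyclotomic degrees, yields the stated bound and the admissible range $x > \exp(Be^{8\omega(d)}d^8)$. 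The principal difficulty, and the reason the explicit constants and the exponent $9/8$ are forced, is to make every estimate \emph{uniform} in $d$: one must simultaneously control the number of fields, their degrees and discriminants, and the truncation error, rather than treat a single fixed $d$.
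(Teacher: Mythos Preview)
Your proposal follows the paper's approach almost exactly: interpret $\rho_U(p)$ as the order of $\gamma$ modulo a prime of $K$, stratify by the $d$-part of $p-\big(\tfrac{\Delta}{p}\big)$ to rewrite $d\mid\rho_U(p)$ as a M\"obius-weighted sum of Frobenius conditions in the Kummer extensions $K(\zeta_n,\gamma^{1/m})$, compute their degrees (with the $(m,h)$ collapse and the $\sqrt{\Delta_0}\in\mathbb{Q}(\zeta_n)$ entanglement), apply effective Chebotarev, and truncate. Two small slips worth flagging: the combination of splitting conditions is \emph{not} finite---the stratification over $v\mid d^\infty$ is an infinite sum, which is exactly why truncation is needed (you say ``finite'' early on but correctly invoke truncation later); and the log-discriminants of the $L_{n,m}$ are $\ll_U n^2\log n$ with $n=dv$ growing along the stratification, not $\ll_U d\log d$---this growth is precisely what forces the truncation at $y\asymp(\log x)^{1/8}/d$ and produces the $d^8$ in the range for $x$.
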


Cubre and Rouse's proof of the asymptotic formula for $\mathcal{R}_F(d; x)$ relies on the study of the algebraic group $G : x^2 - 5y^2 = 1$ and relates $\rho_F(p)$ with the order of $(3/2,1/2) \in G(\mathbb{F}_p)$.
Instead, our proof of Theorem~\ref{thm:main} is an adaptation of the methods that Moree~\cite{MR2274151} used to prove an asymptotic formula for the number of primes $p \leq x$ such that the multiplicative order of $g$ modulo $p$ is divisible by $d$, where $g \notin \{-1, 0, +1\}$ is a fixed rational number.

\section{Acknowledgements}

The author thanks Laura~Capuano (Politecnico di Torino) for several helpful discussions concerning Lemma~\ref{lem:Knd}.

\section{Notation}

We employ the Landau--Bachmann ``Big Oh'' notation $O$, as well as the associated Vinogradov symbol $\ll$. 
Any dependence of the implied constants is explicitly stated or indicated with subscripts. 
In particular, notations like $O_U$ and $\ll_U$ are shortcuts for $O_{a_1,a_2}$ and $\ll_{a_1,a_2}$, respectively.
For $x \geq 2$ we let $\Li(x) := \int_2^x \!\tfrac{\mathrm{d}t}{\log t}$ denote the logarithmic integral.
We reserve the letter $p$ for prime numbers.
Given an integer $d$, we let $d^\infty$ denote the supernatural number $\prod_{p \mid d} p^\infty$.
Given a field $F$ and a positive integer $n$, we write $F^n$ for the set of $n$th powers of elements of $F$.
Given a Galois extension $E/F$ of number fields and a prime ideal $P$ of $\mathcal{O}_E$ lying above an unramified prime ideal $\mathfrak{p}$ of $\mathcal{O}_F$, we write $\Big[\frac{E / F}{P}\Big]$ for the Frobenius automorphism corresponding to $P / \mathfrak{p}$, that is, the unique element $\sigma$ of the Galois group $\Gal(E / F)$ that satisfies $\sigma(a) \equiv a^{N(\mathfrak{p})} \pmod P$ for every $a \in \mathcal{O}_E$, where $N(\mathfrak{p})$ denotes the norm of $\mathfrak{p}$.
Moreover, we let $\Big[\frac{E / F}{\mathfrak{p}}\Big]$ be the set of all $\Big[\frac{E / F}{P}\Big]$ with $P$ prime ideal of $\mathcal{O}_E$ lying over $\mathfrak{p}$.
We write $\Delta_{E/F}$ for the relative discriminant of $E/F$, and $\Delta_E := \Delta_{E/\mathbb{Q}}$ for the absolute discriminant of $E$.
For every integer $d$ and for every prime number $p$ we let $\big(\tfrac{d}{p}\big)$ be the Legendre symbol.
For every positive integer $n$, we let $\zeta_n := \mathrm{e}^{2\pi\mathbf{i} / n}$ be a primitive $n$th root of unity.
We write $\omega(n)$, $\varphi(n)$, $\mu(n)$, and $\tau(n)$, for the number of prime factors, the totient function, the M\"obius function, and the number of divisors of a positive integer $n$, respectively.

\section{General preliminaries}

\begin{lem}\label{lem:zetanmodP}
Let $n$ be a positive integer, let $p$ be a prime number not dividing $n$, and let $P$ be a prime ideal of $\mathcal{O}_{\mathbb{Q}(\zeta_n)}$ lying over $p$.
Then $\zeta_n$ has multiplicative order modulo $P$ equal to $n$.
\end{lem}
\begin{proof}
Let $k$ be the multiplicative order of $\zeta_n$ modulo $P$, that is, $k$ is the least positive integer such that $\zeta_n^k \equiv 1 \pmod P$.
On the one hand, we have that $p \mid N(P) \mid N(\zeta_n^k - 1)$.
On the other hand, since $\zeta_n^n \equiv 1 \pmod P$, we have that $k \mid n$, and consequently $\zeta_n^k$ is a $m$th primitive root of unity, where $m := n / k$.
If $k < n$ then $m > 1$ and $N(\zeta_n^k - 1)$ is either $1$ or a prime factor of $m$, but both cases are impossible since $p \nmid n$.
Hence, $k = n$.
\end{proof}

\begin{lem}\label{lem:binomial}
Let $F$ be a field, let $a \in F$, and let $n$ be a positive integer.
Then $X^n - a$ is irreducible over $F$ if and only if $a \notin F^p$ for each prime $p$ dividing $n$ and $a \notin -4F^4$ \mbox{whenever $4 \mid n$.}
\end{lem}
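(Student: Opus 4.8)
This is the classical Vahlen--Capelli irreducibility criterion for binomials, so the plan is to prove the two implications separately, with essentially all of the work going into showing that the stated conditions are \emph{sufficient} for irreducibility. I would dispose of the easy direction first, by contraposition, exhibiting explicit factorizations. If $a = b^p$ for some prime $p \mid n$, write $n = pm$ and use $Y^p - b^p = (Y - b)(Y^{p-1} + \dots + b^{p-1})$ with $Y = X^m$, so that $X^m - b$ is a proper factor of $X^n - a$. If $4 \mid n$ and $a = -4b^4$, write $n = 4m$ and apply the Sophie Germain identity $Y^4 + 4b^4 = (Y^2 - 2bY + 2b^2)(Y^2 + 2bY + 2b^2)$ with $Y = X^m$, splitting $X^n - a$ into two factors of degree $2m$. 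For $n > 1$ these factors are proper, so $X^n - a$ is reducible.

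For sufficiency, fix a root $\alpha$ of $X^n - a$ in an algebraic closure of $F$; it suffices to show $[F(\alpha):F] = n$, since then $X^n - a$ must be the minimal polynomial of $\alpha$. I would first establish the prime case: if $a \notin F^p$ then $X^p - a$ is irreducible over $F$. Indeed, a monic factor of degree $d$ with $0 < d < p$ has constant term of the shape $\pm\zeta\,\alpha^d$ for some $p$-th root of unity $\zeta$, so $c := \pm\zeta\,\alpha^d \in F$ and $c^p = a^d$; since $\gcd(d,p) = 1$, a B\'ezout combination $ud + vp = 1$ gives $a = (c^u a^v)^p \in F^p$, a contradiction. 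Next I would reduce the general case to prime powers: writing $n = \prod_i p_i^{e_i}$ and setting $\beta_i := \alpha^{n/p_i^{e_i}}$, the exponents $n/p_i^{e_i}$ have gcd $1$, so a B\'ezout combination expresses $\alpha$ as a monomial in the $\beta_i$, whence $F(\alpha) = F(\beta_1, \dots, \beta_r)$. If each $X^{p_i^{e_i}} - a$ is irreducible, then the degrees $[F(\beta_i):F] = p_i^{e_i}$ are pairwise coprime and multiply to $n$, forcing $[F(\alpha):F] = n$. The hypotheses transfer to each prime-power factor unchanged.

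It remains to treat a prime power $n = p^e$, which I would do by induction on $e$, the base $e = 1$ being the prime case above. Assuming $X^{p^e} - a$ irreducible, let $\alpha$ be a root of $X^{p^{e+1}} - a$; then $\beta := \alpha^p$ is a root of the irreducible $X^{p^e} - a$, so $[F(\beta):F] = p^e$, and by the prime case over $K := F(\beta)$ it is enough to show $\beta \notin K^p$, which yields $[F(\alpha):K] = p$ and the full degree $p^{e+1}$. The tool is the norm: since $X^{p^e} - a$ is the minimal polynomial of $\beta$, one has $N_{K/F}(\beta) = (-1)^{p^e + 1} a$, so $\beta = \delta^p$ with $\delta \in K$ would force $(-1)^{p^e + 1} a \in F^p$. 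For odd $p$ the exponent $p^e + 1$ is even, the sign is $+1$, and we immediately contradict $a \notin F^p$.

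The genuinely delicate case, and the one I expect to be the main obstacle, is $p = 2$. Here $p^e$ is even, so the norm yields only $-a \in F^2$ rather than an outright contradiction, and the argument closes at once only when $-a \notin F^2$. In the residual subcase $-a \in F^2$, writing $-a = b^2$ forces $\sqrt{-1} = \sqrt a / b$ into the field $F(\beta)$, and this is precisely where the hypothesis $a \notin -4F^4$ becomes indispensable. The mechanism is already transparent in the first nontrivial step, from $X^2 - a$ to $X^4 - a$: a putative square root $s + t\sqrt a \in F(\sqrt a)$ of $\sqrt a$ would satisfy $2st = 1$ and $s^2 + at^2 = 0$, whence $a = -4s^4 \in -4F^4$, contrary to hypothesis. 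I would carry out the analogous coefficient comparison and descent through the square root $\sqrt{-1}$ at each level of the induction, so that $a \notin -4F^4$ eliminates the sole surviving obstruction and the induction closes.
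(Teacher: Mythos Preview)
The paper does not prove this lemma at all: it simply cites \cite[Chapter~8, Theorem~1.6]{MR982265} (Karpilovsky, \emph{Topics in Field Theory}) and moves on. So there is nothing to compare on the paper's side beyond a reference.

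Your proposal, by contrast, is a complete outline of the classical Vahlen--Capelli proof, and the strategy is sound: the explicit factorizations for necessity, the constant-term/B\'ezout trick for the prime case, the coprime-degree reduction to prime powers, and the norm argument for the inductive step are all correct and standard. The one place where your sketch is thin is the closing of the $p=2$ tower when $-a\in F^2$: the phrase ``carry out the analogous coefficient comparison and descent through $\sqrt{-1}$ at each level'' hides the fact that the cleanest way to run the induction is to pass to $F(\sqrt a)$ and check that \emph{both} hypotheses, $\sqrt a\notin F(\sqrt a)^2$ and $\sqrt a\notin -4F(\sqrt a)^4$, are inherited from $a\notin F^2$ and $a\notin -4F^4$; the same $(s+t\sqrt a)^2$ computation you did for the first also disposes of the second (via $-\sqrt a=(2\delta^2)^2$). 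With that made explicit, the argument is complete. In short: the paper outsources the proof, you supply it.
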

\begin{proof}
See~\cite[Chapter~8, Theorem~1.6]{MR982265}.
\end{proof}

\begin{lem}\label{lem:abelian}
Let $F$ be a field, let $n$ be a positive integer not divisible by the characteristic of $F$, and let $m$ be the number of $n$th roots of unity contained in $F$.
Then, for every $a \in F$, the extension $F\big(\zeta_n, a^{1/n}\big) / F$ is abelian if and only if $a^m \in F^n$.
\end{lem}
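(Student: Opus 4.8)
The plan is to realise $\Gal(L/F)$ explicitly inside the affine group of $\mathbb{Z}/n\mathbb{Z}$ and to translate both the abelianness of this group and the condition $a^m\in F^n$ into one and the same congruence. Write $E:=F(\zeta_n)$, fix a root $\alpha$ of $X^n-a$ in an algebraic closure, and set $L:=F(\zeta_n,\alpha)=E(\alpha)$. Since $\zeta_n\in L$, the field $L$ contains all the roots $\zeta_n^j\alpha$ of $X^n-a$, so $L/F$ is Galois and the word ``abelian'' is meaningful. Let $G:=\Gal(L/F)$, $H:=\Gal(L/E)$, and $e:=[L:E]=|H|$. As $\langle\zeta_n\rangle\cap F$ is cyclic of order $m$, we have $\zeta_m:=\zeta_n^{n/m}\in F$. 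For $\sigma\in G$ define $t(\sigma)\in(\mathbb{Z}/n\mathbb{Z})^\times$ and $s(\sigma)\in\mathbb{Z}/n\mathbb{Z}$ by $\sigma(\zeta_n)=\zeta_n^{t(\sigma)}$ and $\sigma(\alpha)=\zeta_n^{s(\sigma)}\alpha$. A direct computation gives the cocycle relations $t(\sigma\tau)=t(\sigma)t(\tau)$ and $s(\sigma\tau)=s(\sigma)+t(\sigma)s(\tau)$, so $\sigma\mapsto(t(\sigma),s(\sigma))$ embeds $G$ into the affine group $\{x\mapsto tx+s\}$ over $\mathbb{Z}/n\mathbb{Z}$; moreover $\sigma$ and $\tau$ commute if and only if
\begin{equation*}
s(\sigma)\bigl(1-t(\tau)\bigr)\equiv s(\tau)\bigl(1-t(\sigma)\bigr)\pmod n.\tag{$*$}
\end{equation*}
Since each $\sigma$ fixes $\zeta_m\in F$, every $t(\sigma)$ satisfies $t(\sigma)\equiv1\pmod m$; and on $H$ one has $t\equiv1$ with $s(H)=\langle n/e\rangle$. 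More generally, for $d\mid n$ the condition $t(\sigma)\equiv1\pmod d$ for all $\sigma$ is equivalent to $\zeta_d=\zeta_n^{n/d}\in F$, i.e.\ to $d\mid m$; I will use this \emph{field fact} repeatedly.

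The pivotal observation is that $a^m\in F^n$ is equivalent to the existence of an integer $r$ with
\begin{equation*}
m\,s(\sigma)\equiv r\bigl(t(\sigma)-1\bigr)\pmod n\qquad\text{for all }\sigma\in G.\tag{$\heartsuit$}
\end{equation*}
Indeed, if $a^m=c^n$ with $c\in F$ then $(\alpha^m/c)^n=1$, so $\alpha^m=c\,\zeta_n^r$ for some $r$; applying $\sigma$ and comparing $\zeta_n^{m s(\sigma)}c\zeta_n^{r}=\sigma(\alpha^m)=c\,\zeta_n^{r\,t(\sigma)}$ yields $(\heartsuit)$. Conversely, given $(\heartsuit)$ the element $\gamma:=\alpha^m\zeta_n^{-r}$ satisfies $\sigma(\gamma)/\gamma=\zeta_n^{m s(\sigma)-r(t(\sigma)-1)}=1$ for every $\sigma$, hence $\gamma\in F$ and $a^m=(\alpha^m)^n=\gamma^n\in F^n$. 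Thus the lemma reduces to proving that $G$ is abelian if and only if $(\heartsuit)$ holds. For the ``if'' direction I would assume $(\heartsuit)$ and verify $(*)$. Writing $t(\sigma)-1=m\,v_\sigma$ (legitimate since $t\equiv1\pmod m$), $(\heartsuit)$ gives $s(\sigma)\equiv r v_\sigma\pmod{n/m}$, say $s(\sigma)=r v_\sigma+(n/m)j_\sigma$. Substituting into $(*)$, the terms $r v_\sigma v_\tau$ cancel and what remains is $-n\,(j_\sigma v_\tau-j_\tau v_\sigma)\equiv0\pmod n$; the decisive point is that the factor $n/m$ meets the factor $m$ hidden in $1-t(\tau)=-m v_\tau$. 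Hence every commutator is trivial and $G$ is abelian.

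The ``only if'' direction is the main obstacle. Assuming $G$ abelian, commutativity of $H$ with $G$ in $(*)$ forces $t(\sigma)\equiv1\pmod e$ for all $\sigma$; by the field fact this gives $\zeta_e\in F$, hence $e\mid m$. Consequently $m\,s$ kills $s(H)=\langle n/e\rangle$, so $t\mapsto S(t):=m\,s(\sigma)$ (for any $\sigma$ with $t(\sigma)=t$) is a well-defined crossed homomorphism $S\colon T\to\mathbb{Z}/n\mathbb{Z}$ on $T:=t(G)\cong\Gal(E/F)$, and $(\heartsuit)$ is precisely the statement that $S$ is a coboundary, $S(t)=r(t-1)$. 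To produce $r$ I would work prime by prime via $\mathbb{Z}/n\mathbb{Z}\cong\prod_p\mathbb{Z}/p^{k_p}\mathbb{Z}$. The field fact, applied with $d$ a power of $p$, shows that the minimal $p$-adic valuation among the $t-1$ for $t\in T$ equals $v_p(m)$. For odd $p$ the group $(\mathbb{Z}/p^{k}\mathbb{Z})^\times$ is cyclic, hence so is the image of $T$; choosing a generator $t_0$, one checks that $v_p(t_0-1)=v_p(m)\le v_p(S(t_0))$, so $S(t_0)=r(t_0-1)$ is solvable for $r$, and the crossed-homomorphism property then propagates the equality to all powers of $t_0$. (Note that here abelianness enters only through $e\mid m$, which is what makes $S$ well defined.)

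The \textbf{delicate case is $p=2$}, where $(\mathbb{Z}/2^{k}\mathbb{Z})^\times$ is not cyclic and $T$ may have rank two. There a single generator no longer determines $S$, and I expect to need the full force of the commuting relation $(*)$ — equivalently $S(t)(t'-1)\equiv S(t')(t-1)\pmod{2^{k}}$ for all $t,t'\in T$ — to fix the second coordinate of $r$ and glue the values of $S$ across the two cyclic factors; the recurring difficulty is closing a factor-of-$2^{v_2(m)}$ gap that pairwise commutativity alone leaves open, which I would absorb using the cocycle identity together with $t\equiv1\pmod{2^{v_2(m)}}$. Invariantly, the task is to show that the class of the cocycle $\sigma\mapsto\sigma(\alpha^m)/\alpha^m$ in $H^1\!\bigl(\Gal(E/F),\langle\zeta_n\rangle\bigr)$ vanishes, and it is this vanishing at the prime $2$ that I anticipate to be the technical heart of the proof.
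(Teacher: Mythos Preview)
The paper itself does not prove this lemma; it simply cites Karpilovsky's textbook. Your direct argument is therefore a different route by construction. The affine-group embedding, the commutation criterion~$(*)$, the reformulation of $a^m\in F^n$ as~$(\heartsuit)$, and the ``if'' direction are all correct and cleanly executed.

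In the ``only if'' direction there is an oversight already at the odd primes. Having chosen $t_0\in T$ whose image generates the cyclic group $T_p\subseteq(\mathbb{Z}/p^{k}\mathbb{Z})^\times$ and solved $S(t_0)\equiv r(t_0-1)\pmod{p^k}$, you propagate to the powers $t_0^j$; but a general $t\in T$ need not lie in $\langle t_0\rangle$ --- it merely satisfies $t\equiv t_0^j\pmod{p^k}$ --- and you have not shown $S(t)\equiv S(t_0^j)\pmod{p^k}$. Your parenthetical that ``abelianness enters only through $e\mid m$'' is precisely the point of failure: one needs~$(*)$ once more, applied to $s$ rather than to $S$, to see that $S$ factors through $T_p$. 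The repair is short: for $\sigma$ with $t(\sigma)\equiv 1\pmod{p^k}$ and $\tau$ with $v_p\bigl(t(\tau)-1\bigr)=v_p(m)$, relation~$(*)$ read modulo $p^k$ gives $s(\sigma)\bigl(1-t(\tau)\bigr)\equiv 0$, hence $v_p\bigl(s(\sigma)\bigr)\ge k-v_p(m)$ and so $v_p\bigl(m\,s(\sigma)\bigr)\ge k$. With this the odd-prime case closes. Since the paper only ever invokes the lemma with $n$ odd (in the proof of Lemma~\ref{lem:powers}), your acknowledged incompleteness at $p=2$ is immaterial for the application, and you could simply restrict the statement to odd $n$. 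If you do want the full result, be aware that at $p=2$ the symmetric relation $S(t)(1-t')\equiv S(t')(1-t)$ is genuinely weaker than what you need --- there are non-coboundary crossed homomorphisms on $(\mathbb{Z}/8\mathbb{Z})^\times$ satisfying it --- so the argument must again descend to~$(*)$ for $s$ itself, just as in the repair above.
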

\begin{proof}
See~\cite[Chapter~8, Theorem~3.2]{MR982265}.
\end{proof}

\begin{lem}\label{lem:sqrt}
Let $n$ be an odd positive integer and let $d$ be a squarefree integer.
Then $\sqrt{d} \in \mathbb{Q}(\zeta_n)$ if and only if $d \mid n$ and $d \equiv 1 \pmod 4$.
\end{lem}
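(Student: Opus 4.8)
The plan is to prove the two implications separately, using the quadratic Gauss sum for the ``if'' direction and a ramification argument for the ``only if'' direction. The single fact I would isolate at the outset is that, for every odd prime $p$, the quadratic Gauss sum $g_p := \sum_{a=1}^{p-1}\big(\tfrac{a}{p}\big)\zeta_p^a$ satisfies $g_p^2 = p^*$, where $p^* := (-1)^{(p-1)/2}p$; in particular $\sqrt{p^*} \in \mathbb{Q}(\zeta_p)$ and $p^* \equiv 1 \pmod 4$.

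For the ``if'' direction, assume $d \mid n$ and $d \equiv 1 \pmod 4$. Since $n$ is odd and $d \mid n$, the integer $d$ is odd, hence squarefree of the form $d = \varepsilon \prod_i p_i$ with $\varepsilon \in \{-1, +1\}$ and the $p_i$ distinct odd primes dividing $n$. Each $\sqrt{p_i^*}$ lies in $\mathbb{Q}(\zeta_{p_i}) \subseteq \mathbb{Q}(\zeta_n)$, so $\prod_i \sqrt{p_i^*} \in \mathbb{Q}(\zeta_n)$ is a square root of $\prod_i p_i^*$. It then remains to check that $\prod_i p_i^* = d$. I would argue this by reducing modulo $4$: each $p_i^* \equiv 1 \pmod 4$ forces $\prod_i p_i^* \equiv 1 \pmod 4$, while $\prod_i p_i^*$ and $d$ differ only by a sign $\pm 1$; since also $d \equiv 1 \pmod 4$, the two can only agree in sign, giving $\prod_i p_i^* = d$ and hence $\sqrt{d} \in \mathbb{Q}(\zeta_n)$. (The degenerate case $d = 1$ is immediate.)

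For the ``only if'' direction, suppose $\sqrt{d} \in \mathbb{Q}(\zeta_n)$, that is, $\mathbb{Q}(\sqrt{d}) \subseteq \mathbb{Q}(\zeta_n)$. I would invoke two standard facts: the rational primes ramifying in $\mathbb{Q}(\zeta_n)$ are exactly those dividing $n$ (so, $n$ being odd, $2$ is unramified), and a prime ramifying in a subfield ramifies in the whole field. The discriminant of $\mathbb{Q}(\sqrt{d})$ equals $d$ if $d \equiv 1 \pmod 4$ and $4d$ otherwise; thus if $d \equiv 2, 3 \pmod 4$ then $2$ would ramify in $\mathbb{Q}(\sqrt{d})$ and hence in $\mathbb{Q}(\zeta_n)$, which is impossible. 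This yields $d \equiv 1 \pmod 4$, so that $\disc\big(\mathbb{Q}(\sqrt{d})\big) = d$; every odd prime dividing $d$ then ramifies in $\mathbb{Q}(\sqrt{d})$, hence divides $n$, and since $d$ is squarefree this gives $|d| \mid n$, that is, $d \mid n$.

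The only genuinely delicate point is the sign bookkeeping in the ``if'' direction --- matching $\prod_i p_i^*$ with $d$ rather than with $-d$ --- which is exactly what the hypothesis $d \equiv 1 \pmod 4$ is there to control; everything else reduces to the quoted properties of Gauss sums and of ramification in cyclotomic and quadratic fields.
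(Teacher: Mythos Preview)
Your argument is correct. The paper does not actually prove this lemma: it simply cites \cite[Lemma~3]{MR284417} (Schinzel). By contrast, you give a self-contained proof: the ``if'' direction via the Gauss-sum identity $g_p^2 = p^*$ together with the sign bookkeeping $\prod_i p_i^* \equiv 1 \equiv d \pmod 4$, and the ``only if'' direction via the standard ramification facts that primes ramifying in a subfield ramify in the ambient field and that the ramified primes in $\mathbb{Q}(\zeta_n)$ (for odd $n$) are exactly those dividing $n$. Both directions are sound; the only small cosmetic point is that the trivial case $d=1$ in the ``only if'' direction also goes through (the discriminant is $1$, nothing ramifies, and the conclusions hold vacuously), so nothing is missing there. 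Compared to the paper's bare citation, your approach has the advantage of being entirely elementary and independent of the external reference.
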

\begin{proof}
See~\cite[Lemma~3]{MR284417}.
\end{proof}

We need the following form of the Chebotarev Density Theorem.

\begin{thm}\label{thm:chebotarev}
Let $E/F$ be a Galois extension of numbers fields with Galois group $G$, and let $C$ be the union of $k$ conjugacy classes of $G$.
Then
\begin{align*}
&\#\!\left\{\mathfrak{p} \text{ prime ideal of } \mathcal{O}_F \text{ non-ramifying in } E :  N_{F/\mathbb{Q}}(\mathfrak{p}) \leq x,\, \Big[\tfrac{E/F}{\mathfrak{p}}\Big] \subseteq C\right\} \\
&\phantom{mmmmm}=\frac{\#C}{\#G} \cdot \Li(x) + O\!\left(k\;\!x \exp\!\Big({-c_1}\big(\log x / n_E\big)^{1/2}\Big)\right)
\end{align*}
for every
\begin{equation*}
x \geq \exp\!\left(c_2 \max\!\Big(n_E(\log |\Delta_E|)^2, |\Delta_E|^{2/n_E} / n_E\Big)\right) ,
\end{equation*}
where $n_E := [E : \mathbb{Q}]$ and $c_1, c_2 > 0$ are absolute constants.
\end{thm}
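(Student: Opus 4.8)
The plan is to follow the unconditional effective argument of Lagarias and Odlyzko, reducing to abelian (Hecke) $L$-functions by a cyclic base change. First I would observe that the counting function on the left is additive over the conjugacy classes making up $C$, so it suffices to prove the estimate when $C$ is the single conjugacy class of a fixed element $g \in G$, with error $O\big(x\exp(-c_1(\log x/n_E)^{1/2})\big)$; summing over the $k$ classes then reproduces the factor $k$. Let $m$ be the order of $g$, put $H := \langle g\rangle$, and let $F' := E^{H}$, so that $E/F'$ is cyclic with $\Gal(E/F') = H$. By Deuring's reduction, the number of primes $\mathfrak{p}$ of $\mathcal{O}_F$ unramified in $E$ and of norm $\le x$ whose Frobenius class equals that of $g$ equals, up to an explicit rational factor built from $\#C$, $\#H$, and $\#G$, the number of degree-one primes $\mathfrak{P}$ of $\mathcal{O}_{F'}$ over $F$ with $N_{F'/\mathbb{Q}}(\mathfrak{P}) \le x$ and $\big[\tfrac{E/F'}{\mathfrak{P}}\big] = g$. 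The point of this reduction is that $E/F'$ is abelian, so the Artin $L$-functions that will appear are Hecke $L$-functions of $F'$, which are entire for nontrivial characters and enjoy a fully quantitative analytic theory.

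Next I would expand the indicator of $g$ inside the cyclic group $H$ over its characters, $\mathbf{1}_{\{g\}} = \tfrac1{\#H}\sum_{\chi}\overline{\chi(g)}\,\chi$, and write the corresponding weighted prime sum $\sum_{N\mathfrak{P}\le x}\Lambda_{F'}(\mathfrak{P})\,\chi\big(\big[\tfrac{E/F'}{\mathfrak{P}}\big]\big)$ in terms of the logarithmic derivatives $-L'/L(s,\chi)$. A truncated Perron formula followed by a contour shift converts each such sum into a main term coming from the pole at $s=1$ plus a sum over the nontrivial zeros of $L(s,\chi)$ inside the critical strip. Only the trivial character contributes a pole (there $L(s,1)$ differs from $\zeta_{F'}(s)$ by finitely many Euler factors), and after restoring the combinatorial factor and using character orthogonality this pole yields exactly the main term $\tfrac{\#C}{\#G}\Li(x)$; all other characters contribute only to the error.

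The heart of the matter is to bound the zero sums uniformly in the field. Here I would invoke a de la Vall\'ee Poussin--type zero-free region for the Hecke $L$-functions of $F'$, of the shape $\sigma \ge 1 - c/\log\!\big(|\Delta_E|\,(|t|+2)^{n_E}\big)$, together with a Riemann--von Mangoldt count for the zeros in a bounded box, whose conductors are controlled by the conductor-discriminant formula and hence by $|\Delta_E|$ and $n_E$. Balancing the contour against the logarithmic width of this region produces the savings $\exp\!\big(-c_1(\log x/n_E)^{1/2}\big)$, valid once $x$ exceeds the threshold $\exp\big(c_2\max(n_E(\log|\Delta_E|)^2,\,|\Delta_E|^{2/n_E}/n_E)\big)$, beyond which this error dominates the lower-order truncation and ramification terms. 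The main obstacle is the possible exceptional real (Siegel) zero $\beta$ of $\zeta_{F'}$ near $s=1$: its contribution of size $x^{\beta}/\beta$ must be shown to be absorbed into the stated error, which requires the ineffective Landau--Siegel bound $1-\beta \gg_{\varepsilon} |\Delta_E|^{-\varepsilon}$ (equivalently Stark's estimates for the exceptional zero). This single zero is the reason the constants $c_1, c_2$ are merely asserted to be absolute rather than explicitly computable, while the $(\log x)^{1/2}$ in the exponent simply reflects the logarithmic width of the zero-free region.
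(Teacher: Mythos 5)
The paper does not actually reprove this theorem: its proof is a two-line citation, combining the unconditional effective Chebotarev density theorem of Lagarias and Odlyzko \cite[Theorem~1.3]{MR0447191} with Stark's bounds \cite[Lemmas~8 and~11]{MR342472} for the exceptional (Siegel) zero of the Dedekind zeta function $\zeta_E$. Your proposal instead reconstructs the Lagarias--Odlyzko argument itself: reduction to a single class, Deuring's reduction to the cyclic subgroup $H = \langle g \rangle$ and the abelian extension $E/E^{H}$, character orthogonality, Perron's formula and a contour shift against a zero-free region for the Hecke $L$-functions, with conductors controlled by the conductor--discriminant formula. That outline is the standard one and is essentially correct as far as it goes; up to the last step you are simply re-deriving the result the paper cites.

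The genuine gap is in your treatment of the exceptional zero, which is exactly the point the paper's citation of Stark is designed to handle. You assert that the needed input is ``the ineffective Landau--Siegel bound $1-\beta \gg_{\varepsilon} |\Delta_E|^{-\varepsilon}$ (equivalently Stark's estimates for the exceptional zero).'' This is wrong in two ways. First, Stark's estimates are not equivalent to Siegel's bound: they are effective, obtained by showing that a real zero of $\zeta_E$ very close to $1$ must already be a zero of $\zeta_k$ for some quadratic subfield $k \subseteq E$, where the class number formula gives an effective bound of the shape $1-\beta \gg |\Delta_k|^{-1/2}$ (up to logarithmic factors); since $|\Delta_k| \leq |\Delta_E|^{2/n_E}$, this produces a bound whose exponent shrinks with the degree, roughly $1-\beta \gg |\Delta_E|^{-1/n_E}$. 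Second, and more importantly, this degree-dependent exponent is indispensable for the statement as given: to absorb $\Li(x^{\beta})$ into $O\big(x\exp(-c_1(\log x/n_E)^{1/2})\big)$ one needs $\log x \gg 1/\big(n_E(1-\beta)^2\big)$, and it is precisely Stark's bound that converts this requirement into the stated admissible threshold $|\Delta_E|^{2/n_E}/n_E$. A Siegel-type bound with fixed $\varepsilon$ and ineffective constant $C(\varepsilon)$ cannot do this: it would force the threshold $\log x \gg C(\varepsilon)^{-2}|\Delta_E|^{2\varepsilon}/n_E$, which for $n_E > 1/\varepsilon$ is not majorized by $c_2\,|\Delta_E|^{2/n_E}/n_E$ for any absolute constant $c_2$, so the theorem's range of validity and the absoluteness of $c_1, c_2$ would both be lost. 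Relatedly, your closing remark is backwards: the constants are not ``merely absolute rather than explicitly computable'' because of the Siegel zero---with Stark's lemmas the entire statement is effective; the theorem simply does not assert effectivity.
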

\begin{proof}
The result follows from the effective form of the Chebotarev Density Theorem given by Lagarias and Odlyzko~\cite[Theorem~1.3]{MR0447191} and from the bounds for the exceptional zero of the Dedekind zeta function $\zeta_E$ given by Stark~\cite[Lemma~8 and~11]{MR342472}.
\end{proof}

\section{Preliminaries to the proof of Theorem~\ref{thm:main}}

Recalling that $h$ is the greatest positive integer such that $\gamma$ is an $h$th power in $K$, write $\gamma = \gamma_0^h$ for some $\gamma_0 \in K$.
Also, let $\sigma_K \in \Gal(K / \mathbb{Q})$ be the nontrivial automorphism, which satisfies $\sigma_K\big(\!\sqrt{\Delta}\;\!\big) = -\sqrt{\Delta}$.
Note that, since $\gamma = \alpha/\beta$ and $\sigma_K$ swaps $\alpha$ and $\beta$, we have that $\sigma_k(\gamma) = \gamma^{-1}$.
For all positive integers $d, n$ such that $d \mid n$, let $K_{n, d} := K\big(\zeta_n, \gamma^{1/d}\big)$.

\begin{lem}\label{lem:order}
Let $p$ be a prime number not dividing $a_2 \Delta$ and let $\pi$ be a prime ideal of $\mathcal{O}_K$ lying over $p$.
Then $\rho_U(p)$ is equal to the multiplicative order of $\gamma$ modulo $\pi$.
Moreover, $\rho_U(p)$ divides $p - \big(\tfrac{\Delta}{p}\big)$.
\end{lem}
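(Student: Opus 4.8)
The plan is to push everything into the residue field $\mathcal{O}_K/\pi$ by means of the Binet formula. First I would record that, since $\alpha$ and $\beta$ are the roots of the monic integer polynomial $f_U$, they are algebraic integers lying in $K$, and that $U_n = (\alpha^n - \beta^n)/(\alpha - \beta)$ for every $n \geq 0$ (the right-hand side vanishes at $n = 0$, equals $1$ at $n = 1$, and satisfies the recurrence). The two hypotheses on $p$ enter as unit conditions modulo $\pi$: because $p \nmid a_2 = -\alpha\beta$, both $\alpha$ and $\beta$ are invertible modulo $\pi$, so in particular $\gamma = \alpha/\beta$ makes sense as the element $\bar\alpha\,\bar\beta^{-1}$ of $(\mathcal{O}_K/\pi)^\times$; and because $\Delta = (\alpha - \beta)^2$ with $p \nmid \Delta$, the prime $p$ is unramified in $K$ and $\alpha - \beta$ is likewise a unit modulo $\pi$.

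With these in hand, the first claim is immediate. For a rational integer $m$ one has $\pi \mid m \iff p \mid m$, since $\pi \cap \mathbb{Z} = p\mathbb{Z}$; applying this to $m = U_k$ and using that $\alpha - \beta$ is a unit modulo $\pi$, the Binet formula gives
\[
p \mid U_k \;\iff\; \pi \mid U_k \;\iff\; \alpha^k \equiv \beta^k \pmod{\pi} \;\iff\; \gamma^k \equiv 1 \pmod{\pi},
\]
the last step dividing by the unit $\beta^k$. Taking the least positive such $k$ shows $\rho_U(p)$ equals $\ord_\pi(\gamma)$, the multiplicative order of $\gamma$ modulo $\pi$.

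For the divisibility statement I would separate the two unramified cases. If $\big(\tfrac{\Delta}{p}\big) = 1$ then $p$ splits and $\mathcal{O}_K/\pi \cong \mathbb{F}_p$, so $\ord_\pi(\gamma) \mid p - 1 = p - \big(\tfrac{\Delta}{p}\big)$ by Lagrange. If $\big(\tfrac{\Delta}{p}\big) = -1$ then $p$ is inert, $\pi = p\,\mathcal{O}_K$, and $\mathcal{O}_K/\pi \cong \mathbb{F}_{p^2}$; here the crude bound $p^2 - 1$ is not enough, and the key observation is that the arithmetic Frobenius $x \mapsto x^p$ on $\mathbb{F}_{p^2}$ is precisely the reduction of the nontrivial automorphism $\sigma_K$, which fixes $\pi$ in the inert case. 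Since $\sigma_K(\gamma) = \gamma^{-1}$ (as recorded before the lemma), reducing modulo $\pi$ yields $\gamma^p \equiv \gamma^{-1} \pmod{\pi}$, hence $\gamma^{p+1} \equiv 1 \pmod{\pi}$ and $\ord_\pi(\gamma) \mid p + 1 = p - \big(\tfrac{\Delta}{p}\big)$.

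The routine parts are the verification $p \mid U_k \iff \pi \mid U_k$ and the identification of the residue fields from the value of the Legendre symbol. The one step that I expect to be the main obstacle is the inert case: one must genuinely use that the Frobenius on $\mathcal{O}_K/\pi$ is induced by $\sigma_K$—rather than merely appealing to the order of $\mathbb{F}_{p^2}^\times$—because only this finer information, combined with $\sigma_K(\gamma) = \gamma^{-1}$, produces the sharp modulus $p + 1$ demanded by the statement.
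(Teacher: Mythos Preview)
Your proof is correct and follows essentially the same approach as the paper: Binet's formula to reduce $p\mid U_k$ to $\gamma^k\equiv 1\pmod\pi$, then the Frobenius identity $\gamma^p\equiv\sigma_K(\gamma)=\gamma^{-1}\pmod\pi$ in the inert case. The only cosmetic differences are that you invoke $\pi\cap\mathbb{Z}=p\mathbb{Z}$ directly (whereas the paper treats the inert and split cases separately to pass from $\pi\mid U_n$ back to $p\mid U_n$), and in the split case you cite Lagrange on $\mathbb{F}_p^\times$ rather than the trivial Frobenius---neither change is substantive.
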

\begin{proof}
First, note that $p \nmid a_2$ ensures that $\beta$ is invertible modulo $\pi$, and consequently it makes sense to consider the multiplicative order of $\gamma = \alpha / \beta$ modulo $\pi$.
Also, $p \nmid \Delta$ implies that $p$ does not ramifies in $K$ and that $\alpha \not\equiv \beta \pmod \pi$.

We shall prove that $p \mid U_n$ if and only if $\gamma^n \equiv 1 \pmod \pi$, for every positive integer $n$.
Then the claim on $\rho_U(p)$ follows easily.
It is well known that the Binet's formula
\begin{equation}\label{equ:binet}
U_n = \frac{\alpha^n - \beta^n}{\alpha - \beta}
\end{equation}
holds for every positive integer $n$.
On the one hand, if $p \mid U_n$ then, since $p \mathcal{O}_K \subseteq \pi$ and~\eqref{equ:binet}, we have $\alpha^n \equiv \beta^n \pmod \pi$, and consequently $\gamma^n \equiv 1 \pmod \pi$.
On the other hand, if $\gamma^n \equiv 1 \pmod \pi$ then by~\eqref{equ:binet} we get $U_n \equiv 0 \pmod \pi$.
If $p$ is inert in $K$, then $p\mathcal{O}_K = \pi$ and so $p \mid U_n$.
If $p$ splits in $K$, then $p\mathcal{O}_K = \pi \cap \sigma_K(\pi)$.
Thus $U_n \equiv 0 \pmod \pi$ and $U_n \equiv \sigma_K(U_k) \equiv 0 \pmod{\sigma_K(\pi)}$ imply that $p \mid U_n$.

Let $\sigma := \Big[\tfrac{K / \mathbb{Q}}{\pi}\Big]$.
On the one hand, if $\big(\tfrac{\Delta}{p}) = -1$ then $\sigma = \sigma_K$ and $\gamma^{p + 1} \equiv \sigma_K(\gamma) \gamma \equiv \gamma^{-1} \gamma \equiv 1 \pmod \pi$, so that $\rho_U(p) \mid p + 1$.
On the other hand, if $\big(\tfrac{\Delta}{p}) = +1$ then $\sigma = \id$ and $\gamma^{p - 1} \equiv \gamma \gamma^{-1} \equiv 1 \pmod \pi$, so that $\rho_U(p) \mid p - 1$.
\end{proof}

For each prime number $p$ not dividing $a_2 \Delta$, let us define the \emph{index of appearance} of $p$ as
\begin{equation*}
\iota_U(p) := \big(p - \big(\tfrac{\Delta}{p}\big)\big) / \rho_U(p) .
\end{equation*}
Note that, in light of Lemma~\ref{lem:order}, $\iota_U(p)$ is an integer.

\begin{lem}\label{lem:iota}
Let $d, n$ be positive integers such that $d \mid n$, and let $p$ be a prime number not dividing $a_2 \Delta$.
Moreover, let $P$ be a prime ideal of $\mathcal{O}_{K_{n,d}}$ lying over $p$ and let $\sigma := \Big[\tfrac{K_{n,d} / \mathbb{Q}}{P}\Big]$.
Then
\begin{equation}\label{equ:diotaU}
p \equiv \big(\tfrac{\Delta}{p}\big) \!\!\!\!\pmod n \quad\text{ and }\quad d \mid \iota_U(p)
\end{equation}
if and only if $\sigma = \id$ or
\begin{equation}\label{equ:sigma}
\sigma(\zeta_n) = \zeta_n^{-1} \quad\text{ and }\quad \sigma\big(\gamma^{1/d}\big) = \gamma^{-1/d} . 
\end{equation}
\end{lem}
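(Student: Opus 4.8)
The plan is to translate both sides of the claimed equivalence into statements about how $\sigma$ acts on the three generators $\sqrt{\Delta}$, $\zeta_n$ and $\gamma^{1/d}$ of $K_{n,d}$ over $\mathbb{Q}$, and then match them up. Throughout write $g := \gamma^{1/d}$ and $\varepsilon := \left(\tfrac{\Delta}{p}\right)$, and let $\pi := P \cap \mathcal{O}_K$. First I would record three facts about $\sigma$. Since $\sigma$ restricts on $K$ to the Frobenius $\big[\tfrac{K/\mathbb{Q}}{\pi}\big]$, which is trivial exactly when $p$ splits in $K$, we get $\sigma(\sqrt{\Delta}) = \varepsilon\sqrt{\Delta}$ and, using $\sigma_K(\gamma) = \gamma^{-1}$, also $\sigma(\gamma) = \gamma^{\varepsilon}$. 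Because $\sigma(a) \equiv a^{p} \pmod P$ for all $a \in \mathcal{O}_{K_{n,d}}$ and $\zeta_n$ has order $n$ modulo $P$ (so distinct $n$th roots of unity stay distinct modulo $P$, as in Lemma~\ref{lem:zetanmodP}), we in fact have $\sigma(\zeta_n) = \zeta_n^{\,p}$ exactly. Finally, from $\sigma(g)^d = \sigma(\gamma) = \gamma^{\varepsilon} = (g^{\varepsilon})^d$ we obtain $\sigma(g) = \zeta_d^{\,j} g^{\varepsilon}$ for some integer $j$, where $\zeta_d \in \mathbb{Q}(\zeta_n) \subseteq K_{n,d}$ since $d \mid n$.

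Next I would rewrite the right-hand condition. From $\sigma(\zeta_n) = \zeta_n^{\,p}$ we see that $\sigma(\zeta_n) = \zeta_n^{\varepsilon}$ is equivalent to $p \equiv \varepsilon \pmod n$. A short case analysis then collapses the disjunction: $\sigma = \id$ forces $\sigma(\sqrt{\Delta}) = \sqrt{\Delta}$, hence $\varepsilon = +1$; and $\sigma(g) = g^{-1}$ is incompatible with $\varepsilon = +1$, since then $\sigma(g) = \zeta_d^{\,j} g$ would give $g^{2} = \zeta_d^{-j}$ and thus $\gamma^{2} = 1$, contradicting that $\gamma$ is not a root of unity, so $\sigma(g) = g^{-1}$ forces $\varepsilon = -1$. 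Using that $\sigma = \id$ is the same as $\sigma$ fixing the three generators, one checks that the right-hand side of the lemma holds if and only if $p \equiv \varepsilon \pmod n$ and $\sigma(g) = g^{\varepsilon}$, the latter meaning precisely $j \equiv 0 \pmod d$.

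It remains to connect $j$ with $\iota_U(p)$. Reducing $\sigma(g) = \zeta_d^{\,j} g^{\varepsilon}$ modulo $P$ against $\sigma(g) \equiv g^{p} \pmod P$, and cancelling the invertible $g^{\varepsilon}$, yields $\zeta_d^{\,j} \equiv g^{\,p - \varepsilon} \pmod P$. Put $r := \rho_U(p)$, which by Lemma~\ref{lem:order} equals the order of $\gamma$ modulo $\pi$, hence also modulo $P$, and set $R := \ord_P(g)$. Then $\gamma = g^{d}$ gives $r = R/\gcd(R,d)$, so with $e := \gcd(R,d)$ one has $R = re$ and $\ord_P(g^{r}) = e$. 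Since $p - \varepsilon = r\,\iota_U(p)$, we get $g^{\,p-\varepsilon} = (g^{r})^{\iota_U(p)}$, and as $\zeta_d$ likewise has order $d$ modulo $P$ this shows that $j \equiv 0 \pmod d$ if and only if $e \mid \iota_U(p)$.

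Finally I would close the gap between $e \mid \iota_U(p)$ and $d \mid \iota_U(p)$. The forward implication is free because $e \mid d$. For the converse I would invoke the congruence $p \equiv \varepsilon \pmod n$, which is common to both sides of the lemma: it gives $d \mid n \mid p - \varepsilon = r\,\iota_U(p)$, hence $d \mid r\,\iota_U(p)$. Comparing, for each prime $\ell$, the $\ell$-adic valuations $v_\ell$ in the relations $e = \gcd(re,d)$, $e \mid \iota_U(p)$ and $d \mid r\,\iota_U(p)$ then forces $\ell^{v_\ell(d)} \mid \iota_U(p)$, so $d \mid \iota_U(p)$. This last step is the main obstacle: because $e = \gcd(R,d)$ may be a proper divisor of $d$, the condition $e \mid \iota_U(p)$ extracted from the Frobenius is a priori strictly weaker than $d \mid \iota_U(p)$, and it is precisely the shared congruence $p \equiv \varepsilon \pmod n$ that supplies the missing divisibility. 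Assembling the pieces yields the asserted equivalence between \eqref{equ:diotaU} and the condition that $\sigma = \id$ or \eqref{equ:sigma} holds.
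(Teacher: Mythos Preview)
Your argument is correct and follows the same overall strategy as the paper's: determine the Frobenius action on the generators $\sqrt{\Delta}$, $\zeta_n$, $\gamma^{1/d}$ and match it against the arithmetic conditions, using Lemma~\ref{lem:zetanmodP} to lift congruences among roots of unity to equalities. The one place you diverge is in linking $\sigma(g)=g^{\varepsilon}$ to $d\mid\iota_U(p)$. The paper, once $p\equiv\varepsilon\pmod n$ is in hand (so that $(p-\varepsilon)/d$ is an integer), simply writes $g^{\,p-\varepsilon}=\gamma^{(p-\varepsilon)/d}$ and invokes Lemma~\ref{lem:order}: this is $\equiv 1\pmod\pi$ iff $\rho_U(p)\mid(p-\varepsilon)/d$ iff $d\mid\iota_U(p)$, in one line. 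Your detour through $R=\ord_P(g)$, $e=\gcd(R,d)$ and the $\ell$-adic comparison is valid but self-inflicted; the ``main obstacle'' you flag --- upgrading $e\mid\iota_U(p)$ to $d\mid\iota_U(p)$ --- disappears if you deploy the congruence $d\mid p-\varepsilon$ at the beginning of that step rather than at the end.
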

\begin{proof}
First, suppose that $\big(\tfrac{\Delta}{p}\big) = -1$.
Let us assume~\eqref{equ:diotaU}.
On the one hand, since $p \equiv -1 \pmod n$, we have
\begin{equation}\label{equ:sigmazetan1}
\sigma(\zeta_n) \equiv \zeta_n^p \equiv \zeta_n^{-1} \pmod P .
\end{equation}
Since $\sigma(\zeta_n) = \zeta_n^k$ for some integer $k$, and since $p$ does not divide $n$, Lemma~\ref{lem:zetanmodP} and~\eqref{equ:sigmazetan1} yield that $\sigma(\zeta_n) = \zeta_n^{-1}$.

On the other hand, $d \mid \iota_U(p)$ implies that $\rho_U(p) \mid (p + 1) / d$.
Hence, letting $\pi := P \cap \mathcal{O}_K$, Lemma~\ref{lem:order} yields $\gamma^{(p + 1) / d} \equiv 1 \pmod \pi$.
Consequently,
\begin{equation}\label{equ:sigmagammad}
\sigma\big(\gamma^{1/d}\big) \equiv \big(\gamma^{1/d}\big)^p \equiv \gamma^{(p + 1)/d} \cdot \gamma^{-1/d} \equiv \gamma^{-1/d} \pmod P .
\end{equation}
Note that, since $\big(\tfrac{\Delta}{p}\big) = -1$, we have 
\begin{equation*}
\sigma(\gamma) = \sigma|_K(\gamma) = \Big[\tfrac{K / \mathbb{Q}}{\pi}\Big](\gamma) = \sigma_K(\gamma) = \gamma^{-1},
\end{equation*}
so that $\sigma\big(\gamma^{1/d}\big) = \zeta_d^k \gamma^{-1/d}$ for some integer $k$.
Thus Lemma~\ref{lem:zetanmodP} and~\eqref{equ:sigmagammad} yield that $\sigma\big(\gamma^{1/d}\big) = \gamma^{-1/d}$.
We have proved~\eqref{equ:sigma}.

Now let us assume~\eqref{equ:sigma}.
On the one hand, we have
\begin{equation*}
\zeta_n^{-1} = \sigma(\zeta_n) = \sigma|_{\mathbb{Q}(\zeta_n)}(\zeta_n) = \left[\frac{\mathbb{Q}(\zeta_n) / \mathbb{Q}}{P \cap \mathcal{O}_{\mathbb{Q}(\zeta_n)}}\right]\!(\zeta_n) = \zeta_n^p ,
\end{equation*}
so that $p \equiv -1 \pmod n$.
On the other hand,
\begin{equation*}
\gamma^{(p + 1) / d} \equiv \big(\gamma^{1/d}\big)^p \cdot \gamma^{1/d} \equiv \sigma\big(\gamma^{1/d}\big) \cdot \gamma^{1/d} \equiv \gamma^{-1/d} \cdot \gamma^{1/d} \equiv 1 \pmod P ,
\end{equation*}
so that $\gamma^{(p + 1) / d} \equiv 1 \pmod \pi$, which, by Lemma~\ref{lem:order}, implies $d \mid \iota_U(p)$.
We have proved~\eqref{equ:diotaU}.

If $\big(\tfrac{\Delta}{p}\big) = +1$ then the proof proceeds similarly to the case $\big(\tfrac{\Delta}{p}\big) = -1$, and yields that~\eqref{equ:diotaU} is equivalent to $\sigma(\zeta_n) = \zeta_n$ and $\sigma\big(\gamma^{1/d}\big) = \gamma^{1/d}$, that is, $\sigma = \id$.
\end{proof}

\begin{lem}\label{lem:rootsof1inK}
The roots of unity contained in $K$ are: the sixth roots of unity, if $\Delta_0 = -3$; the forth roots of unity, if $\Delta_0 = -1$; or the second roots of unity, if $\Delta_0 \neq -1, -3$.
\end{lem}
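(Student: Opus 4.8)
The plan is to use that the set $\mu_K$ of roots of unity contained in $K$ is a finite cyclic group, say generated by a primitive $m$th root of unity $\zeta_m$, and then to pin down the admissible values of $m$ from the fact that $K$ is quadratic over $\mathbb{Q}$.

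First I would note that $\zeta_m \in K$ gives $\mathbb{Q}(\zeta_m) \subseteq K$, so that $\varphi(m) = [\mathbb{Q}(\zeta_m) : \mathbb{Q}]$ divides $[K : \mathbb{Q}] = 2$ by the tower law; in particular $\varphi(m) \leq 2$, which forces $m \in \{1, 2, 3, 4, 6\}$. Since $-1 \in \mathbb{Q} \subseteq K$ is always a root of unity, $m$ must be even, leaving only the three possibilities $m \in \{2, 4, 6\}$.

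Next I would match each value of $m$ to a condition on $\Delta_0$, recalling that $K = \mathbb{Q}\big(\!\sqrt{\Delta}\,\big) = \mathbb{Q}\big(\!\sqrt{\Delta_0}\,\big)$. The key identities are $\zeta_4 = \sqrt{-1}$ and $\zeta_6 = \big(1 + \sqrt{-3}\big)/2$, which give that $\zeta_4 \in K$ if and only if $\sqrt{-1} \in K$, that is $K = \mathbb{Q}\big(\!\sqrt{-1}\,\big)$, equivalently $\Delta_0 = -1$; and similarly $\zeta_6 \in K$ if and only if $\sqrt{-3} \in K$, equivalently $\Delta_0 = -3$. Hence: if $\Delta_0 = -1$ then $4 \mid m$, so $m = 4$ and $\mu_K$ consists of the fourth roots of unity; if $\Delta_0 = -3$ then $6 \mid m$, so $m = 6$ and $\mu_K$ consists of the sixth roots of unity; and if $\Delta_0 \notin \{-1, -3\}$ then neither $\zeta_4$ nor $\zeta_6$ lies in $K$, so the only remaining option is $m = 2$, i.e. $\mu_K = \{\pm 1\}$.

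There is essentially no serious obstacle here: the whole argument reduces to the elementary observation that $\varphi(m) \leq 2$ together with the evenness of $m$ leaves only $m \in \{2, 4, 6\}$, followed by the bookkeeping that translates membership of a specific root of unity into an equality of quadratic fields. The only point deserving a little care is the exhaustive verification that $\varphi(m) \leq 2$ indeed forces $m \in \{1, 2, 3, 4, 6\}$, after which the identification is immediate.
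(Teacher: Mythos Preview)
Your proof is correct and follows essentially the same approach as the paper: bound $\varphi(m) \leq 2$ from $\mathbb{Q}(\zeta_m) \subseteq K$, list the finitely many admissible $m$, and identify the cases via the explicit formulas $\zeta_4 = \sqrt{-1}$ and $\zeta_6 = (1+\sqrt{-3})/2$. Your additional observation that $-1 \in K$ forces $m$ even is a nice touch that the paper leaves implicit.
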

\begin{proof}
If $\zeta_n \in K$ for some positive integer $n$, then $\mathbb{Q}(\zeta_n) \subseteq K$, so that $\varphi(n) \leq 2$, and $n \in \{1, 2, 3, 4, 6\}$.
Then the claim follows easily since $\zeta_3 = (-1 + \sqrt{-3}) / 2$, $\zeta_4 = \sqrt{-1}$, and $\zeta_6 = (1 + \sqrt{-3}) / 2$.
\end{proof}

\begin{lem}\label{lem:powers}
Let $n$ be an odd positive integer with $3 \nmid n$ whenever $\Delta_0 = -3$, and let $d$ be a positive integer dividing $n$.
Then $a \in K \cap K(\zeta_n)^d$ if and only if $a \in K^d$.
\end{lem}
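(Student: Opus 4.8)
The plan is to prove the nontrivial inclusion $K \cap K(\zeta_n)^d \subseteq K^d$, since the reverse inclusion is immediate from $K \subseteq K(\zeta_n)$. Two consequences of the hypotheses will be used repeatedly: because $d \mid n$ we have $\zeta_d \in \mathbb{Q}(\zeta_n) \subseteq K(\zeta_n)$, and because $d \mid n$ with $n$ odd and ($3 \nmid n$ when $\Delta_0 = -3$), every prime $p \mid d$ is odd and satisfies $p \neq 3$ whenever $\Delta_0 = -3$. I would structure the argument in three stages: first the case $d = p$ prime, then a bootstrap to prime powers, then the assembly of the general $d$.

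The heart of the matter is the prime case $d = p$. Assume $a \in K$ lies in $K(\zeta_n)^p$ but $a \notin K^p$, and seek a contradiction. Since $p$ is odd and $a \notin K^p$, Lemma~\ref{lem:binomial} shows $X^p - a$ is irreducible over $K$, so $[K(a^{1/p}) : K] = p$ for a fixed root $a^{1/p}$. As $\zeta_p \in K(\zeta_n)$, writing $a = c^p$ with $c \in K(\zeta_n)$ gives $a^{1/p} = \zeta_p^{-j} c \in K(\zeta_n)$, hence $K(a^{1/p}) \subseteq K(\zeta_n)$. Now $K(\zeta_n)/K$ is abelian, since its Galois group embeds into $(\mathbb{Z}/n\mathbb{Z})^{\times}$, so \emph{every} intermediate field is Galois over $K$; in particular $K(a^{1/p})/K$ would be Galois. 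The contradiction is that this degree-$p$ extension is not Galois: normality would force $\zeta_p \in K(a^{1/p})$, whence $[K(\zeta_p):K]$ divides $\gcd(p, p-1) = 1$ and so $\zeta_p \in K$. But by Lemma~\ref{lem:rootsof1inK} the only odd prime $p$ for which $\zeta_p \in K$ is $p = 3$, occurring solely when $\Delta_0 = -3$, a case excluded by our hypotheses. Thus $\zeta_p \notin K$, the extension $K(a^{1/p})/K$ is not Galois, and the contradiction is reached; hence $a \in K^p$. This is the step where the condition $3 \nmid n$ when $\Delta_0 = -3$ is indispensable, and it is the main obstacle of the whole proof.

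I would then bootstrap to prime powers by induction on $e$, proving $K \cap K(\zeta_n)^{p^e} = K^{p^e}$ whenever $p^e \mid n$. Given nonzero $a \in K \cap K(\zeta_n)^{p^e}$, the inductive hypothesis applied to $p^{e-1}$ yields $a = b^{p^{e-1}}$ with $b \in K$, while $a = c^{p^e}$ with $c \in K(\zeta_n)$ gives $(b/c^p)^{p^{e-1}} = 1$. Thus $b = \omega c^p$ for a $p^{e-1}$th root of unity $\omega \in K(\zeta_n)$; since $p^e \mid n$ we have $\zeta_{p^e} \in K(\zeta_n)$, so $\omega = \zeta_{p^{e-1}}^{\,j} = (\zeta_{p^e}^{\,j})^p$ is a $p$th power in $K(\zeta_n)$, and therefore $b \in K(\zeta_n)^p$. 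The prime case gives $b \in K^p$, say $b = b_1^p$, whence $a = b_1^{p^e} \in K^{p^e}$. Finally, for a general $d = \prod_i p_i^{e_i}$, any $a \in K \cap K(\zeta_n)^d$ lies in $K \cap K(\zeta_n)^{p_i^{e_i}} = K^{p_i^{e_i}}$ for every $i$, and since the exponents $p_i^{e_i}$ are pairwise coprime a Bézout argument combines these into $a \in K^d$, completing the proof.

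To summarize the difficulty: the prime-power induction and the coprime assembly are routine, the former relying only on the observation that a $p^{e-1}$th root of unity becomes a $p$th power once $\zeta_{p^e}$ is present. The genuine obstacle is the prime case, and within it the verification that $K(a^{1/p})/K$ is non-Galois, which is exactly what forbids $K(a^{1/p})$ from sitting inside the abelian extension $K(\zeta_n)/K$; it is precisely here that Lemma~\ref{lem:rootsof1inK}, combined with the exclusion of $p = 3$ when $\Delta_0 = -3$, does the decisive work.
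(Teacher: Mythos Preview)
Your proof is correct, but it takes a genuinely different route from the paper's. The paper argues in one stroke: given $a\in K$ with $a=b^d$ for some $b\in K(\zeta_n)$, it sets $a_1:=a^{n/d}=b^n$, observes that $K(\zeta_n,a_1^{1/n})=K(\zeta_n)$ is abelian over $K$, and then invokes Lemma~\ref{lem:abelian} (the criterion ``$F(\zeta_n,a^{1/n})/F$ abelian $\Leftrightarrow a^m\in F^n$'') with $m=1$, since the hypotheses on $n$ together with Lemma~\ref{lem:rootsof1inK} force $1$ to be the only $n$th root of unity in $K$. This yields $a_1\in K^n$, and then $a^n=a_1^d\in K^{dn}$ gives $a=\zeta b_1^d$ with $\zeta$ an $n$th root of unity in $K$, hence $\zeta=1$ and $a\in K^d$. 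Your argument, by contrast, never touches Lemma~\ref{lem:abelian}; instead you isolate the prime case via the observation that a non-Galois degree-$p$ extension cannot sit inside the abelian extension $K(\zeta_n)/K$, and then climb to prime powers and to general $d$ by induction and B\'ezout. The paper's approach is considerably shorter and exploits a lemma already set up for later use; yours is more elementary and makes the role of $\zeta_p\notin K$ completely explicit, at the cost of a longer three-stage structure. Both approaches ultimately hinge on the same fact from Lemma~\ref{lem:rootsof1inK}, namely that under the stated hypotheses $K$ contains no nontrivial $n$th root of unity.
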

\begin{proof}
The ``if'' part if obvious.
Let us prove the ``only if'' part.
Note that, by the hypothesis on $n$ and by Lemma~\ref{lem:rootsof1inK}, the only $n$th root of unity in $K$ is $1$.
Suppose that $a \in K \cap K(\zeta_n)^d$.
Hence, there exists $b \in K(\zeta_n)$ such that $a = b^d$.
Putting $a_1 := a^{n / d}$, we get that $a_1 = b^n$.
Therefore, $K\big(\zeta_n, a_1^{1/n}\big) = K(\zeta_n, b) = K(\zeta_n)$ is an abelian extension of $K$.
Consequently, by Lemma~\ref{lem:abelian}, we have $a_1 \in K^n$, that is, $a_1 = b_1^n$ for some $b_1 \in K$.
Thus $a^n = a_1^d = b_1^{dn}$, so that $a = \zeta b_1^d$, where $\zeta$ is a $n$th root of unity in $K$.
We already noticed that $\zeta = 1$, hence $a \in K^d$.
\end{proof}

\begin{lem}\label{lem:Knd}
Let $n$ be an odd positive integer with $3 \nmid n$ whenever $\Delta_0 = -3$, and let $d$ be a positive integer dividing $n$.
Then
\begin{equation}\label{equ:degreeKnd}
[K_{n,d} : \mathbb{Q}] = \frac{\varphi(n)d}{(d, h)} \cdot
\begin{cases}
1 & \text{ if } \sqrt{\Delta} \in \mathbb{Q}(\zeta_n) , \\
2 & \text{ if } \sqrt{\Delta} \notin \mathbb{Q}(\zeta_n) , \\
\end{cases}
\end{equation}
while
\begin{equation}\label{equ:Knddisc}
|\Delta_{K_{n,d}}|^{1 / [K_{n,d} : \mathbb{Q}]} \ll_U n^3 \quad\text{ and }\quad \log|\Delta_{K_{n,d}}| \ll_U n^2 \log(n + 1) .
\end{equation}
Moreover, there exists $\sigma \in \Gal(K_{n,d} / \mathbb{Q})$ satisfying~\eqref{equ:sigma} if and only if $\sqrt{\Delta} \notin \mathbb{Q}(\zeta_n)$ or $\Delta < 0$.
In particular, if $\sigma$ exists then it belongs to the center of $\Gal(K_{n,d} / \mathbb{Q})$.
\end{lem}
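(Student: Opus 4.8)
The plan is to establish the four assertions in turn, the recurring difficulty being the bookkeeping of roots of unity in $K$. This is controlled by the hypotheses: since $d \mid n$ is odd, with $3 \nmid d$ whenever $\Delta_0 = -3$, Lemma~\ref{lem:rootsof1inK} shows that $d$ is coprime to the order $\#\mu_K$ of the group $\mu_K$ of roots of unity of $K$. I expect this coprimality to be exactly what makes the clean invariant $(d,h)$ appear, both in the degree and in the existence of $\sigma$.

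For the degree I would factor $[K_{n,d} : \mathbb{Q}] = [K_{n,d} : K(\zeta_n)] \cdot [K(\zeta_n) : \mathbb{Q}]$. The outer factor is $\varphi(n)$ or $2\varphi(n)$ according as $\sqrt{\Delta} \in \mathbb{Q}(\zeta_n)$ or not, which accounts for the two cases of~\eqref{equ:degreeKnd}. Since $d \mid n$, the field $K(\zeta_n)$ contains $\zeta_d$, so $K(\zeta_n,\gamma^{1/d})/K(\zeta_n)$ is a Kummer extension and its degree equals the least $e \geq 1$ with $\gamma^e \in (K(\zeta_n)^\times)^d$; by Lemma~\ref{lem:powers} this is the least $e$ with $\gamma^e \in K^d$. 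To evaluate it I would pass to the torsion-free group $K^\times/\mu_K$: the coprimality of $d$ and $\#\mu_K$ lets one ignore torsion, so $\gamma^e \in K^d$ if and only if $d \mid h^\ast e$, where $h^\ast$ is the content of the image of $\gamma$ in $K^\times/\mu_K$, and the same coprimality yields $(d,h^\ast) = (d,h)$ (here the definition $\gamma = \gamma_0^h$ and the maximality of $h$ enter). Hence the least such $e$ is $d/(d,h)$.

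For the element $\sigma$, note first that~\eqref{equ:sigma} forces $\sigma(\gamma) = \gamma^{-1} \neq \gamma$, hence $\sigma|_K = \sigma_K$, while $\sigma(\zeta_n) = \zeta_n^{-1}$ means $\sigma$ restricts to complex conjugation on $\mathbb{Q}(\zeta_n)$. If $\Delta > 0$ and $\sqrt{\Delta} \in \mathbb{Q}(\zeta_n)$, then $\sqrt{\Delta}$ is real, so it is fixed by complex conjugation, contradicting $\sigma(\sqrt{\Delta}) = -\sqrt{\Delta}$; thus no $\sigma$ exists. Conversely, if $\Delta < 0$ then complex conjugation itself satisfies~\eqref{equ:sigma}: it sends $\zeta_n \mapsto \zeta_n^{-1}$, it sends the imaginary $\sqrt{\Delta}$ to $-\sqrt{\Delta}$, and since $|\gamma| = 1$ one has $\overline{\gamma^{1/d}} = \gamma^{-1/d}$. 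If instead $\sqrt{\Delta} \notin \mathbb{Q}(\zeta_n)$, I would define $\tau$ on $K(\zeta_n)$ with $\tau(\zeta_n) = \zeta_n^{-1}$ and $\tau|_K = \sigma_K$ using the linear disjointness of $\mathbb{Q}(\zeta_n)$ and $K$, and then extend $\tau$ to $K_{n,d}$ by the isomorphism extension theorem. Writing $d' := d/(d,h)$, $h' := h/(d,h)$, and $\theta_0 := \gamma_0^{1/d'}$ (so that $K_{n,d} = K(\zeta_n)(\theta_0)$ with minimal polynomial $X^{d'} - \gamma_0$ over $K(\zeta_n)$), the correct target is the root $N_{K/\mathbb{Q}}(\gamma_0)\,\theta_0^{-1}$ of $X^{d'} - \sigma_K(\gamma_0)$, using $\sigma_K(\gamma_0) = N_{K/\mathbb{Q}}(\gamma_0)\,\gamma_0^{-1}$ with $N_{K/\mathbb{Q}}(\gamma_0) = \pm 1$. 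A parity check — $d'$ is odd, and if $N_{K/\mathbb{Q}}(\gamma_0) = -1$ then $h$, and hence $h'$, is even — gives $\sigma(\gamma^{1/d}) = \sigma(\theta_0)^{h'} = \gamma^{-1/d}$, so~\eqref{equ:sigma} holds. I anticipate this root-of-unity verification (and its counterpart in the degree computation) to be the main obstacle, and it is precisely what the parity hypotheses on $d$ are there to tame.

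Centrality I would verify directly: \eqref{equ:sigma} gives $\sigma^2 = \id$, and for arbitrary $\rho \in \Gal(K_{n,d}/\mathbb{Q})$ one checks $\sigma\rho\sigma^{-1} = \rho$ on the generators $\zeta_n$, $\sqrt{\Delta}$, $\gamma^{1/d}$, the only subtle point being that $\rho(\gamma^{1/d}) = \zeta_d^{\,b}(\gamma^{1/d})^{\pm 1}$ together with $\sigma(\zeta_d) = \zeta_d^{-1}$ forces $\sigma(\rho(\gamma^{1/d}))^{-1} = \rho(\gamma^{1/d})$. Finally, for~\eqref{equ:Knddisc}, set $n_E := [K_{n,d}:\mathbb{Q}] \le 2n^2$. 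The rational primes ramifying in $K_{n,d}$ lie in $\{p : p \mid n\} \cup S_U$, where $S_U$ is the finite set of primes dividing $2a_2\Delta$ and depends only on $U$. Bounding the root discriminant by $\prod_{p\text{ ramified}} p^{1 + v_p(n_E)}$ via the standard tame and wild estimates on the exponents of the different, the primes of $S_U$ contribute $O_U(1)$ since the Kummer part is tamely ramified there (they do not divide $n$, hence not $d$), while the primes dividing $n$ contribute at most $\log\mathrm{rad}(n) + \log n_E \le 3\log n + O(1)$. This gives $|\Delta_{K_{n,d}}|^{1/[K_{n,d}:\mathbb{Q}]} \ll_U n^3$, and the second estimate follows upon multiplying by $[K_{n,d}:\mathbb{Q}] \le 2n^2$.
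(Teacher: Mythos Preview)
Your proposal is correct and follows the same overall strategy as the paper, with a few local variations worth noting. For the degree, the paper shows directly that $X^{d_0}-\gamma_0^{h_0}$ (with $d_0=d/(d,h)$, $h_0=h/(d,h)$) is irreducible over $K(\zeta_n)$ via Lemma~\ref{lem:powers} and Lemma~\ref{lem:binomial}; your computation of the order of $\gamma$ in $K(\zeta_n)^\times/(K(\zeta_n)^\times)^d$ is an equivalent Kummer-theoretic rephrasing, and your passage through $K^\times/\mu_K$ plays the same role as the paper's appeal to the maximality of $h$. For the existence of $\sigma$, the paper treats both cases uniformly: it builds $\sigma_1\in\Gal(K(\zeta_n)/\mathbb{Q})$ with $\sigma_1(\zeta_n)=\zeta_n^{-1}$ and $\sigma_1|_K=\sigma_K$, then extends it by sending $\gamma^{1/d}$ to the root $\gamma^{-1/d}$ of $(\sigma_1 f)(X)=X^{d_0}-\gamma_0^{-h_0}$, using exactly your parity observation (since $d$ is odd, $h_0\equiv h\pmod 2$, hence $N_{K/\mathbb{Q}}(\gamma_0^{h_0})=1$). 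Your shortcut of taking complex conjugation when $\Delta<0$ is a nice simplification that the paper does not use; your alternative generator $\theta_0=\gamma_0^{1/d'}$ in the other case works for the same reason. Centrality is checked the same way. For~\eqref{equ:Knddisc} the paper is more concrete: it bounds $\Delta_{K_{n,d}/K(\zeta_n)}$ by the discriminant of the minimal polynomial of $s\gamma^{1/d}$ (with $s$ clearing denominators of $\gamma_0$), deduces that every prime dividing $\Delta_{K_{n,d}}$ divides $An$ for a constant $A$ depending only on $U$, and then applies Hensel's estimate $|\Delta_L|^{1/n_L}\le n_L\prod_{p\mid\Delta_L}p$; your ramification-theoretic argument reaches the same bound, though the claim that primes of $S_U$ contribute $O_U(1)$ really rests on the fact that for $p\nmid n$ the ramification index in $K_{n,d}/\mathbb{Q}$ divides $2d$, which is worth making explicit.
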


\begin{proof}
Let $d_0 := d / (d, h)$, $h_0 := h / (d, h)$, and $f(X) = X^{d_0} - \gamma_0^{h_0}$.
Suppose that $\gamma_0^{h_0} \in K(\zeta_n)^p$ for some prime number $p$ dividing $d_0$.
Then, by Lemma~\ref{lem:powers}, we have $\gamma_0^{h_0} \in K^p$.
In turn, by the maximality of $h$, it follows that $p \mid h_0$, which is impossible, since $(d_0, h_0) = 1$.
Hence, $\gamma_0^{h_0} \notin K(\zeta_n)^p$ for every prime number $p$ dividing $d_0$.
Consequently, by Lemma~\ref{lem:binomial}, $f$ is irreducible over $K(\zeta_n)$.
Thus $K_{n,d} \cong K(\zeta_n)[X] / (f(X))$, so that $[K_{n,d} : K(\zeta_n)] = d_0$ and $\big(\gamma^{1/d}\big)^{d_0} = \gamma_0^{h_0}$.
It is easy to check that $[K(\zeta_n) : \mathbb{Q}] = \varphi(n)$ if $\sqrt{\Delta} \in \mathbb{Q}(\zeta_n)$, and $[K(\zeta_n) : \mathbb{Q}] = 2\varphi(n)$ otherwise.
Hence,~\eqref{equ:degreeKnd} follows.

Let $s$ be a positive integer such that $s \gamma_0 \in \mathcal{O}_K$, and put $g(X) := s^{d_0} f(X/s) = X^{d_0} - s^{d_0} \gamma_0^{h_0}$.
Since $f$ is the minimal polynomial of $\gamma^{1/d}$ over $K(\zeta_n)$, we get that $g$ is the minimal polynomial of $s\gamma^{1/d}$ over $K(\zeta_n)$.
In particular, since $g \in \mathcal{O}_K[X]$, we have that $s\gamma^{1/d} \in \mathcal{O}_{K_{n,d}}$, .
Hence, from $K_{n,d} = K(\zeta_n)\big(s\gamma^{1/d}\big)$ it follows that
\begin{align*}
\Delta_{K_{n,d} / K(\zeta_n)} &\supseteq \disc(g)\, \mathcal{O}_{K(\zeta_n)} = \prod_{1 \,\leq\, i \,<\, j \,\leq\, d_0} \big(s \gamma^{1/d} \zeta_{d_0}^i - s \gamma^{1/d} \zeta_{d_0}^j\big)^2 \mathcal{O}_{K(\zeta_n)} \\
&= \big(s\gamma^{1/d}\big)^{d_0(d_0 - 1)} d_0^{d_0} \mathcal{O}_{K(\zeta_n)} = \gamma_0^{h_0(d_0 - 1)} \big(s^{d_0 - 1}d_0\big)^{d_0} \mathcal{O}_{K(\zeta_n)} ,
\end{align*}
and
\begin{equation*}
N_{K(\zeta_n) / \mathbb{Q}}\big(\Delta_{K_{n,d} / K(\zeta_n)}\big) = N_{K / \mathbb{Q}}\big(\gamma_0^{h_0}\big)^{(d_0-1)[K(\zeta_n) : K]} \big(s^{d_0 - 1}d_0\big)^{d_0[K(\zeta_n) : \mathbb{Q}]} \mid \big(N_{K/\mathbb{Q}}(\gamma)sn\big)^\infty .
\end{equation*}
Also, a quick computation shows that $\Delta_{K(\zeta_n)} \mid (4\Delta n)^{\infty}$.
Therefore, since
\begin{equation*}
\Delta_{K_{n,d}} = \Delta_{K(\zeta_n)}^{[K_{n,d} : K(\zeta_n)]} N_{K(\zeta_n) / \mathbb{Q}}\big(\Delta_{K_{n,d} / K(\zeta_n)}\big) ,
\end{equation*}
we get that every prime factor of $\Delta_{K_{n,d}}$ divides $A n$, where $A := 4\Delta N_{K/\mathbb{Q}}(\gamma)s$.
By Hensel's estimate~(see, e.g.,~\cite[comments after Theorem~7.3]{MR1482805}), we have that
\begin{equation*}
|\Delta_L|^{1/n_L} \leq n_L \prod_{p \,\mid\, \Delta_L} p  ,
\end{equation*}
for every Galois extension $L / \mathbb{Q}$ of degree $n_L$.
Consequently,
\begin{equation*}
|\Delta_{K_{n,d}}|^{1/[K_{n,d} : \mathbb{Q}]} \leq [K_{n,d} : \mathbb{Q}] A n \ll_U \varphi(n)d n \leq n^3 ,
\end{equation*}
and
\begin{equation*}
\log|\Delta_{K_{n,d}}| \leq [K_{n,d} : \mathbb{Q}] \big(\log(n^3) + O_U(1)\big) \ll_U \varphi(n)d \log(n + 1) \ll n^2 \log(n + 1) ,
\end{equation*}
so that~\eqref{equ:Knddisc} is proved.

Suppose that there exists $\sigma \in \Gal(K_{n,d}/\mathbb{Q})$ satisfying~\eqref{equ:sigma}.
We shall prove that $\sqrt{\Delta} \notin \mathbb{Q}(\zeta_n)$ or $\Delta < 0$.
Assume that $\sqrt{\Delta} \in \mathbb{Q}(\zeta_n)$.
On the one hand, $\sigma(\gamma) = \sigma\big(\gamma^{1/d}\big)^d = \gamma^{-1}$, and consequently $\sigma\big(\!\sqrt{\Delta}\big) = -\sqrt{\Delta}$.
On the other hand, since $\sqrt{\Delta} \in \mathbb{Q}(\zeta_n)$ and $\sigma(\zeta_n) = \zeta_n^{-1}$, we have that $\sigma\big(\!\sqrt{\Delta}\big) = \overline{\sqrt{\Delta}}$.
Therefore, $\overline{\sqrt{\Delta}} = -\sqrt{\Delta}$ and so $\Delta < 0$.
Now let us check that $\sigma$ belongs to the center of $\Gal(K_{n,d} / \mathbb{Q})$.
Note that $N_{K/\mathbb{Q}}(\gamma) = \gamma \,\sigma_K(\gamma) = \gamma \gamma^{-1} = 1$.
Also, $N_{K/\mathbb{Q}}(\gamma_0^{h_0}) = N_{K/\mathbb{Q}}(\gamma_0^{h}) = N_{K/\mathbb{Q}}(\gamma) = 1$, since $d$ is odd and so $h_0 \equiv h \pmod 2$.
Therefore, for every $\tau \in \Gal(K_{n,q} / \mathbb{Q})$, we have $\tau(\gamma_0^{h_0}) = \gamma_0^{h_0}$, if $\tau|_K = \id$, or $\tau(\gamma_0^{h_0}) = N_{K/\mathbb{Q}}(\gamma_0^{h_0})\gamma_0^{-h_0} = \gamma_0^{-h_0}$ if $\tau|_K = \sigma_K$.
Consequently, recalling that $\big(\gamma^{1/d}\big)^{d_0} = \gamma_0^{h_0}$, we have that $\tau(\zeta_n) = \zeta_n^s$ and $\tau\big(\gamma^{1/d}\big) = \zeta_{d_0}^t \gamma^{\pm 1/d}$ for some integers $s, t$.
At this point, it can be easily checked that $(\sigma\tau)(\zeta_n) = (\tau\sigma)(\zeta_n)$ and $(\sigma\tau)\big(\gamma^{1/d}\big) = (\tau\sigma)\big(\gamma^{1/d}\big)$.
Hence, $\sigma$ belongs to the center of $\Gal(K_{n,d}/\mathbb{Q})$.

Suppose that $\sqrt{\Delta} \notin \mathbb{Q}(\zeta_n)$ or $\Delta < 0$.
We shall prove the existence of $\sigma \in \Gal(K_{n,d}/\mathbb{Q})$ satisfying~\eqref{equ:sigma}.
It suffices to show that there exists $\sigma_1 \in \Gal(K(\zeta_n) / K)$ such that $\sigma_1(\zeta_n) = \zeta_n^{-1}$ and $\sigma_1|_K = \sigma_K$.
Indeed, recalling that $K_{n,d} \cong K(\zeta_n)[X] / (f(X))$, we can extend $\sigma_1$ to an automorphism $\sigma \in \Gal(K_{n,d} / \mathbb{Q})$ that sends the root $\gamma^{1/d}$ of $f$ to the root $\gamma^{-1/d}$ of
\begin{equation*}
(\sigma_1 f)(X) = X^{d_0} - \sigma_1(\gamma_0^{h_0}) = X^{d_0} - N_{K / \mathbb{Q}}(\gamma_0^{h_0}) \gamma_0^{-h_0} = X^{d_0} - \gamma_0^{-h_0} ,
\end{equation*}
and so $\sigma$ satisfies~\eqref{equ:sigma}.
Pick $\sigma_0 \in \Gal(\mathbb{Q}(\zeta_n)/\mathbb{Q})$ such that $\sigma_0(\zeta_n) = \zeta_n^{-1}$.
If $\sqrt{\Delta} \in \mathbb{Q}(\zeta_n)$ then $K(\zeta_n) = \mathbb{Q}(\zeta_n)$, $\Delta < 0$, and $\sigma_0\big(\!\sqrt{\Delta}\big) = \overline{\sqrt{\Delta}} = -\sqrt{\Delta}$, so we let $\sigma_1 := \sigma_0$.
If $\sqrt{\Delta} \notin \mathbb{Q}(\zeta_n)$ then $X^2 - \Delta$ is the minimal polynomial of $\sqrt{\Delta}$ over $\mathbb{Q}(\zeta_n)$ and we can extend $\sigma_0$ to $\sigma_1 \in \Gal(K(\zeta_n) / \mathbb{Q})$ such that $\sigma_1\big(\!\sqrt{\Delta}\big) = -\sqrt{\Delta}$.
\end{proof}

\section{Proof of Theorem~\ref{thm:main}}

The proof proceeds similarly to~\cite[Section~2]{MR2274151}.
For all positive integers $d, n$ with $d \mid n$, and for all $x > 1$, let us define
\begin{equation*}
\pi_{U, n, d}(x) := \#\big\{p \leq x : p \nmid a_2\Delta,\, p \equiv \big(\tfrac{\Delta}{p}\big) \!\!\! \pmod n, \, d \mid \iota_U(p)\big\} .
\end{equation*}
In what follows, we will tacitly ignore the finitely many prime numbers dividing $a_2 \Delta$.

\begin{lem}\label{lem:RUdx}
For every positive integer $d$ and for every $x > 1$, we have
\begin{equation}\label{equ:Rdoublesum}
\mathcal{R}_U(d; x) = \!\!\!\sum_{\;\;\;v \,\mid\, d^\infty} \sum_{a \,\mid\, d} \mu(a) \pi_{U, dv, av}(x) .
\end{equation}
\end{lem}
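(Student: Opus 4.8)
The goal is the identity
\begin{equation*}
\mathcal{R}_U(d; x) = \sum_{v \,\mid\, d^\infty} \sum_{a \,\mid\, d} \mu(a)\, \pi_{U, dv, av}(x).
\end{equation*}
The left side counts primes $p \le x$ with $d \mid \rho_U(p)$, while $\pi_{U,n,d}$ counts primes with $p \equiv \big(\tfrac{\Delta}{p}\big) \pmod n$ and $d \mid \iota_U(p)$. The natural bridge is the factorization $p - \big(\tfrac{\Delta}{p}\big) = \rho_U(p)\,\iota_U(p)$ from Lemma~\ref{lem:order}. First I would fix a prime $p$ (ignoring the finitely many dividing $a_2\Delta$) and set $r := \rho_U(p)$, $\iota := \iota_U(p)$. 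The plan is to show that the inner double sum, evaluated as an indicator over this single $p$, equals $1$ exactly when $d \mid r$ and $0$ otherwise; summing over $p \le x$ then yields the claim.

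\textbf{Reducing to a combinatorial identity.}
For the fixed $p$, the term $\pi_{U,dv,av}(x)$ contributes $1$ precisely when $p$ satisfies $p \equiv \big(\tfrac{\Delta}{p}\big) \pmod{dv}$ and $av \mid \iota$, and contributes $0$ otherwise. The first condition, $dv \mid p - \big(\tfrac{\Delta}{p}\big) = r\iota$, together with $v \mid d^\infty$, is what lets $v$ range over divisors supported on the primes of $d$. So the whole double sum, for this $p$, equals
\begin{equation*}
S(p) := \sum_{\substack{v \,\mid\, d^\infty \\ dv \,\mid\, r\iota}} \;\sum_{\substack{a \,\mid\, d \\ av \,\mid\, \iota}} \mu(a).
\end{equation*}
I expect the key step to be a clean bookkeeping of which $(a,v)$ pairs survive. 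The plan is to swap the order and, for each admissible $v$, evaluate the inner M\"obius sum $\sum_{a \mid d,\; av \mid \iota}\mu(a) = \sum_{a \mid (d,\, \iota/(v,\iota))}\mu(a)$, which collapses to an indicator that the relevant gcd equals $1$ by the standard identity $\sum_{a \mid m}\mu(a) = [m=1]$.

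\textbf{The main obstacle.}
The delicate point is disentangling the two coupled divisibility constraints, $dv \mid r\iota$ and $av \mid \iota$, where $v$ is forced to be supported on $\mathrm{rad}(d)$ but $r$ and $\iota$ need not be. I would handle this by localizing at each prime $\ell \mid d$ and comparing $v_\ell(d)$, $v_\ell(r)$, $v_\ell(\iota)$, since only the $d$-part of the arithmetic matters: for primes $\ell \nmid d$ the factor $v$ is trivial and the constraints are automatic. The claim to establish is that $S(p) = 1$ iff $v_\ell(r) \ge v_\ell(d)$ for every $\ell \mid d$, i.e.\ iff $d \mid r$; the opposite direction, that $S(p) = 0$ when some $v_\ell(r) < v_\ell(d)$, should follow because the surviving M\"obius sum then telescopes against a nontrivial gcd. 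Once $S(p) = [\,d \mid \rho_U(p)\,]$ is verified primewise, summing over $p \le x$ with $p \nmid a_2\Delta$ gives $\sum_p S(p) = \#\{p \le x : d \mid \rho_U(p)\} = \mathcal{R}_U(d;x)$, completing the proof.
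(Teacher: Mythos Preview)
Your approach is essentially the paper's: fix a prime $p$, collapse the inner M\"obius sum to the indicator $[(d,\iota/v)=1]$, observe this pins down $v=(\iota,d^\infty)$ uniquely, and then check that the remaining congruence $dv\mid r\iota$ is equivalent to $d\mid r$. One correction: your displayed identity $\sum_{a\mid d,\;av\mid\iota}\mu(a)=\sum_{a\mid(d,\,\iota/(v,\iota))}\mu(a)$ is wrong when $v\nmid\iota$ (the left side is then an empty sum, hence $0$, while the right side need not vanish); the condition $av\mid\iota$ already forces $v\mid\iota$, after which the sum is $\sum_{a\mid(d,\,\iota/v)}\mu(a)$, and the rest of your plan goes through unchanged.
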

\begin{proof}
Every prime number $p$ counted by the inner sum of~\eqref{equ:Rdoublesum} satisfies $p \leq x$, $p \equiv \big(\tfrac{\Delta}{p}\big) \pmod{dv}$, and $\iota_U(p) = v w$ for some integer $w$.
Writing $w = w_1 w_2$, with $w_1 := (w, d)$, we get that the contribution of $p$ to the inner sum or~\eqref{equ:Rdoublesum} is equal to $\sum_{a \mid w_1} \mu(a)$.
Hence,
\begin{equation}\label{equ:Rsinglesum}
\sum_{a \,\mid\, d} \mu(a) \pi_{U, dv, av}(x) = \#\big\{p \leq x : p \equiv \big(\tfrac{\Delta}{p}\big) \!\!\!\pmod{dv}, \, v \mid \iota_U(p), \, \big(\iota_U(p) / v, d\big) = 1\big\} .
\end{equation}
Now it suffices to show that
\begin{equation}\label{equ:Rdoublesum1}
\mathcal{R}_U(d; x) = \!\!\!\sum_{\;\;\;v \,\mid\, d^\infty} \#\big\{p \leq x : p \equiv \big(\tfrac{\Delta}{p}\big) \!\!\!\pmod{dv}, \, v \mid \iota_U(p), \, \big(\iota_U(p) / v, d\big) = 1\big\} .
\end{equation}
On the one hand, let $p$ be a prime number counted on the right-hand side of~\eqref{equ:Rdoublesum1}.
Note that this is counted only one, namely for $v = (\iota_U(p), d^\infty)$.
Then, from $\rho_U(p) \iota_U(p) = p - \big(\tfrac{\Delta}{p}\big)$, it follows that $d \mid \rho_U(p)$.
Hence, $p$ is counted on the left-hand side of~\eqref{equ:Rdoublesum1}.

On the other hand, let $p$ be a prime number counted by $\mathcal{R}_U(d; x)$.
Then $d \mid \rho_U(p)$ and, by Lemma~\ref{lem:order}, $p \equiv \big(\tfrac{\Delta}{p}\big) \pmod d$.
Consequently, there is an integer $v$ such that $v \mid d^\infty$, $p \equiv \big(\tfrac{\Delta}{p}\big) \pmod {dv}$, and $\big(\iota_U(p) / v, d\big) = 1$.
Hence, $p$ is counted on the right-hand side of~\eqref{equ:Rdoublesum1}.
\end{proof}

\begin{lem}\label{lem:piUnd}
Let $n$ be an odd positive integer with $3 \nmid n$ whenever $\Delta_0 = -3$, and let $d$ be a positive integer dividing $n$.
There exist absolute constants $A, B > 0$ such that
\begin{equation*}
\pi_{U, n, d}(x) = \delta_{U,n,d} \Li(x) + O_U\!\left(x \exp\big({-A}(\log x)^{1/2} / n \big)\right)
\end{equation*}
for $x \geq \exp(B n^8)$, where
\begin{equation}\label{equ:defdeltaUnd}
\delta_{U,n,d} := \frac{(d, h)}{\varphi(n)d} \cdot \begin{cases} 1 &\text{ if $\Delta > 0$ or $\Delta_0 \not\equiv 1 \!\!\!\pmod 4$ or $\Delta_0 \nmid n$},\\ 2 &\text{ otherwise}. \end{cases}
\end{equation}
\end{lem}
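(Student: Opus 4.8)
The plan is to translate $\pi_{U,n,d}(x)$ into a Chebotarev count over $K_{n,d}$ and then apply Theorem~\ref{thm:chebotarev} with $F=\mathbb{Q}$ and $E=K_{n,d}$. By definition $\pi_{U,n,d}(x)$ counts the primes $p\le x$ with $p\nmid a_2\Delta$ for which the two conditions in~\eqref{equ:diotaU} hold. Fixing a prime ideal $P$ of $\mathcal{O}_{K_{n,d}}$ above $p$ and setting $\sigma:=\big[\tfrac{K_{n,d}/\mathbb{Q}}{P}\big]$, Lemma~\ref{lem:iota} says that~\eqref{equ:diotaU} is equivalent to the statement ``$\sigma=\id$ or $\sigma$ satisfies~\eqref{equ:sigma}''. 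Since the left-hand side of this equivalence depends only on $p$, the right-hand side is in fact a condition on the whole Frobenius conjugacy class of $p$.

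First I would pin down the relevant conjugacy classes. Since $\gamma$ is not a root of unity we have $\gamma\ne\pm1$, hence $\gamma\notin\mathbb{Q}$ and $K=\mathbb{Q}(\gamma)$; as $\gamma=(\gamma^{1/d})^d$ this gives $K_{n,d}=\mathbb{Q}(\zeta_n,\gamma^{1/d})$, so any automorphism is determined by its values on $\zeta_n$ and $\gamma^{1/d}$, and there is at most one $\sigma$ obeying~\eqref{equ:sigma}. By Lemma~\ref{lem:Knd} such a $\sigma$ exists exactly when $\sqrt{\Delta}\notin\mathbb{Q}(\zeta_n)$ or $\Delta<0$, and then it lies in the center of $\Gal(K_{n,d}/\mathbb{Q})$, so $\{\sigma\}$ is a single conjugacy class; moreover $\sigma(\gamma)=\gamma^{-1}\ne\gamma$ forces $\sigma\ne\id$, so $\{\id\}$ and $\{\sigma\}$ are distinct. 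Thus the admissible $p$ are precisely those whose Frobenius class is contained in $C$, where $C:=\{\id\}$ if $\sigma$ does not exist and $C:=\{\id,\sigma\}$ otherwise; in either case $C$ is a union of $k\in\{1,2\}$ conjugacy classes.

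Applying Theorem~\ref{thm:chebotarev} to this $C$ counts exactly the rational primes $p\le x$ with the prescribed Frobenius, since the prime ideals of $\mathbb{Z}$ are the $(p)$ of norm $p$; the finitely many primes dividing $a_2\Delta$ and the primes ramifying in $K_{n,d}$ (at most $O_U(\log(n+1))$ of them, as they divide $\Delta_{K_{n,d}}$) are negligible against the error. The main term is $\tfrac{\#C}{\#G}\Li(x)$ with $\#G=[K_{n,d}:\mathbb{Q}]$. Substituting the degree formula~\eqref{equ:degreeKnd} and splitting into four cases according to the sign of $\Delta$ and to whether $\sqrt{\Delta}\in\mathbb{Q}(\zeta_n)$ --- the latter rephrased through Lemma~\ref{lem:sqrt} as ``$\Delta_0\equiv1\pmod4$ and $\Delta_0\mid n$'' --- a short check shows $\#C/\#G=\delta_{U,n,d}$: the factor $2$ in~\eqref{equ:defdeltaUnd} appears precisely when $\Delta<0$, $\Delta_0\equiv1\pmod4$, and $\Delta_0\mid n$, because there the degree retains its factor $1$ while $\#C=2$, whereas in the remaining three cases the factors of $2$ in $\#C$ and in $\#G$ are either both absent or cancel.

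For the error term I would feed the bounds~\eqref{equ:Knddisc} into Theorem~\ref{thm:chebotarev}. From $n_E=[K_{n,d}:\mathbb{Q}]\le 2\varphi(n)d\le 2n^2$ and $k\le2$, the Chebotarev error $O\big(k\,x\exp(-c_1(\log x/n_E)^{1/2})\big)$ is at most $O\big(x\exp(-A(\log x)^{1/2}/n)\big)$ for a suitable absolute $A$. For the admissibility threshold, \eqref{equ:Knddisc} yields $n_E(\log|\Delta_E|)^2\ll_U n^6(\log(n+1))^2$ and $|\Delta_E|^{2/n_E}/n_E\ll_U n^6$, so the hypothesis of Theorem~\ref{thm:chebotarev} is met once $x\ge\exp(Bn^8)$. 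The only delicate point is that $B$ can be taken absolute in spite of the $O_U$ in~\eqref{equ:Knddisc}: for all $n$ beyond a bound depending on $U$ the genuine threshold $\exp(O_U(n^6(\log(n+1))^2))$ already lies below $\exp(Bn^8)$, and for the remaining finitely many $n$ the difference between $\pi_{U,n,d}(x)$ and $\delta_{U,n,d}\Li(x)$ over the corresponding bounded range of $x$ is absorbed into the implied constant of the $O_U$ error. This accounting of constants, rather than any genuinely new idea, is the part that needs the most care.
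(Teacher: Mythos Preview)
Your proposal is correct and follows essentially the same approach as the paper: set $E=K_{n,d}$, $F=\mathbb{Q}$, take $C=\{\id\}$ or $C=\{\id,\sigma\}$ according to Lemma~\ref{lem:Knd}, invoke Lemma~\ref{lem:iota} to identify $\pi_{U,n,d}(x)$ with the Chebotarev count for $C$, and apply Theorem~\ref{thm:chebotarev} together with the degree and discriminant bounds of Lemma~\ref{lem:Knd} and the criterion of Lemma~\ref{lem:sqrt}. You supply more detail than the paper does---the uniqueness of $\sigma$, the verification $\sigma\ne\id$, the explicit four-case check that $\#C/\#G=\delta_{U,n,d}$, and the care over whether $B$ can be taken absolute---but none of this constitutes a different route.
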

\begin{proof}
Put $E := K_{n,d}$, $F := \mathbb{Q}$, $G := \Gal(E/F)$, and $C = \{\id, \sigma\}$ if there exists $\sigma \in \Gal(K_{n,d} / \mathbb{Q})$ satisfying~\eqref{equ:sigma}, or $C = \{\id\}$ otherwise.
By Lemma~\ref{lem:Knd}, $\sigma$ belongs to the center of $G$, so that $C$ is the union of conjugacy classes of $G$.
By Lemma~\ref{lem:iota}, we have that $\pi_{U,n,d}(x)$ is the number of primes $p$ not exceeding $x$ and such that $\Big[\frac{E/F}{p}\Big] \subseteq C$.
Thus, taking into account the bounds for the degree and the discriminant of $E / F$ given in Lemma~\ref{lem:Knd}, and considering Lemma~\ref{lem:sqrt}, the asymptotic formula follows by applying Theorem~\ref{thm:chebotarev}.
\end{proof}

\begin{lem}\label{lem:bounds}
Let $d$ be an odd positive integer with $3 \nmid d$ whenever $\Delta_0 = -3$.
If $x > 1$ and $e^{\omega(d)} \leq y \leq \log x / \varphi(d)$, then
\begin{equation}\label{equ:firstbound}
\sum_{\substack{\;\;\;v \,\mid\, d^\infty \\[1pt] v \,>\, y}} \sum_{a \,\mid\, d} \mu(a) \pi_{U, dv, av}(x) \ll \frac{x}{\log x} \cdot \frac{\omega(d) + 1}{\varphi(d)} \cdot \frac{(\log y)^{\omega(d)}}{y}
\end{equation}
and
\begin{equation*}
\sum_{\substack{\;\;\;v \,\mid\, d^\infty \\[1pt] v \,>\, y}} \sum_{a \,\mid\, d} \mu(a) \delta_{U, dv, av} \ll_U \frac{\omega(d) + 1}{\varphi(d)} \cdot \frac{(\log y)^{\omega(d)}}{y} .
\end{equation*}
\end{lem}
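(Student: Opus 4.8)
The plan is to reduce both inequalities to the single elementary tail estimate
\begin{equation}\label{equ:keytail}
\Sigma(d; y) := \sum_{\substack{v \,\mid\, d^\infty \\[1pt] v \,>\, y}} \frac{1}{v} \ll (\omega(d) + 1)\,\frac{(\log y)^{\omega(d)}}{y} \qquad \big(y \geq e^{\omega(d)}\big) ,
\end{equation}
valid for every odd positive integer $d$, and then to establish \eqref{equ:keytail} by induction on $\omega(d)$. Throughout I will use that, since every prime factor of $v \mid d^\infty$ divides $d$, one has $\varphi(dv) = v\,\varphi(d)$, and that the hypotheses $e^{\omega(d)} \le y \le \log x / \varphi(d)$ force $\varphi(d) \le \log x$, hence $\omega(d) \le \log\log x$ and $d \le (\log x)^{O(1)}$; in particular $d$ is a fixed power of $\log x$.

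First I would treat the sum in \eqref{equ:firstbound}. By \eqref{equ:Rsinglesum}, each inner sum $\sum_{a \mid d}\mu(a)\,\pi_{U,dv,av}(x)$ is a nonnegative count of primes, bounded above by $\#\{p \le x : p \equiv \pm 1 \pmod{dv}\} \le \pi(x; dv, 1) + \pi(x; dv, -1)$, because $p \equiv \big(\tfrac{\Delta}{p}\big) \pmod{dv}$ forces $p \equiv \pm 1 \pmod{dv}$. For $y < v \le \sqrt{x}$ we have $dv \le x^{3/5}$, so the Brun--Titchmarsh inequality together with $\varphi(dv) = v\varphi(d)$ gives $\pi(x; dv, \pm 1) \ll x / \big(v\,\varphi(d)\log x\big)$; summing over such $v$ produces $\ll \frac{x}{\varphi(d)\log x}\,\Sigma(d; y)$, which is exactly the target order after invoking \eqref{equ:keytail}. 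For $v > \sqrt{x}$ the trivial bound $\pi(x; dv, \pm 1) \le x/(dv) + 1$ together with the crude count $\#\{v \mid d^\infty : v \le x\} \le (\log x)^{O(\omega(d))} = x^{o(1)}$ shows this range contributes only $x^{1/2 + o(1)}$, which is negligible against the target (the latter is $\gg x/(\log x)^3$).

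Next I would treat the sum in the second display. From \eqref{equ:defdeltaUnd} and $\varphi(dv)=v\varphi(d)$ one has $0 \le \delta_{U,dv,av} \le \frac{2(av, h)}{v\,\varphi(d)\,av} \ll_U \frac{1}{v^2\,\varphi(d)\,a}$, using $(av,h) \le h \ll_U 1$. Hence, summing $\sum_{a \mid d} a^{-1} \le d/\varphi(d)$,
\begin{equation*}
\sum_{\substack{\;\;\;v \,\mid\, d^\infty \\[1pt] v \,>\, y}} \sum_{a \,\mid\, d} \mu(a)\,\delta_{U,dv,av} \;\ll_U\; \frac{d}{\varphi(d)^2} \sum_{\substack{v \,\mid\, d^\infty \\[1pt] v \,>\, y}} \frac{1}{v^2} \;\le\; \frac{d}{\varphi(d)^2\, y}\,\Sigma(d; y) .
\end{equation*}
Since $d$ is odd, $d/\varphi(d) = \prod_{p \mid d}\frac{p}{p-1} \le (3/2)^{\omega(d)} \le e^{\omega(d)} \le y$, so the prefactor $\frac{d}{\varphi(d)^2 y}$ is at most $\frac{1}{\varphi(d)}$, and \eqref{equ:keytail} again yields the claimed bound.

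The hard part will be \eqref{equ:keytail} itself. Writing $d = p^k m$ with $p \mid d$ and $p \nmid m$, every $v \mid d^\infty$ factors uniquely as $v = p^{\,j} u$ with $u \mid m^\infty$, giving the recursion $\Sigma(d; y) = \sum_{j \ge 0} p^{-j}\,\Sigma(m; y p^{-j})$. I would split the sum at $J := \big(\log y - (\omega(d)-1)\big)/\log p$: for $j \le J$ one has $y p^{-j} \ge e^{\omega(m)}$, so the inductive hypothesis applies and, after comparing $\sum_{j \le J}(\log y - j\log p)^{\omega(d)-1}$ with the integral $\tfrac{1}{\log p}\int_{\,\omega(d)-1}^{\log y} s^{\omega(d)-1}\,\mathrm{d}s$, this part is $\ll \big(1 + \tfrac{1}{\log p}\big)\frac{(\log y)^{\omega(d)}}{y}$; for $j > J$ one uses the trivial bound $\Sigma(m; \cdot) \le m/\varphi(m)$ against the geometric tail $\sum_{j > J} p^{-j} \le \tfrac{3}{2}\,e^{\omega(d)-1}/y$. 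The two inputs that keep the implied constant \emph{absolute} (rather than growing like $C^{\omega(d)}$) are exactly the features of odd $d$: first, every prime $p \mid d$ satisfies $\log p \ge \log 3 > 1$, so $\tfrac{1}{\log p} < 1$ and the first part stays below $\big(1 + \tfrac{1}{\log 3}\big)\frac{(\log y)^{\omega(d)}}{y} < 2\,\frac{(\log y)^{\omega(d)}}{y}$; second, the subexponential estimate $m/\varphi(m) \ll \log\log(m+2) \ll \log(\omega(d)+2)$ makes the $j > J$ contribution $\ll \log(\omega(d)+2)\,e^{\omega(d)}/y$, which is dominated by $(\omega(d) - 1)\,\frac{(\log y)^{\omega(d)}}{y}$ once one uses $\log y \ge \omega(d)$ (so that $(\log y)^{\omega(d)} \ge \omega(d)^{\omega(d)}$ overwhelms $e^{\omega(d)}$). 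Checking that these constants close the induction for all $\omega(d) \ge 1$ — handling the small cases $\omega(d) \in \{0,1\}$ directly — is the only genuinely delicate point.
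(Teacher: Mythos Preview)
Your overall strategy matches the paper's: both bound the inner sum via \eqref{equ:Rsinglesum} by $\pi(x;dv,\pm 1)$, split into a Brun--Titchmarsh range and a trivial range, and reduce everything to the tail estimate $\Sigma(d;y)\ll(\omega(d)+1)(\log y)^{\omega(d)}/y$. The difference lies in how that tail estimate is established.

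The paper's route is considerably shorter than your induction. It simply observes that $M_d(t):=\#\{v\le t:v\mid d^\infty\}\ll(\log t)^{\omega(d)}$, then applies partial summation:
\[
\sum_{\substack{v\,\mid\,d^\infty\\ v>y}}\frac{1}{v}
\;\ll\;\int_y^{\infty}\frac{(\log t)^{\omega(d)}}{t^2}\,\mathrm{d}t
\;\le\;\frac{(\omega(d)+1)(\log y)^{\omega(d)}}{y},
\]
the last step being the standard incomplete-gamma bound $\int_a^\infty s^k e^{-s}\,\mathrm{d}s\le(k+1)a^k e^{-a}$ for $a\ge k$, which is exactly where the hypothesis $y\ge e^{\omega(d)}$ enters. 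This avoids entirely the recursive splitting at $J$, the Mertens-type bound on $m/\varphi(m)$, and the constant-tracking you flag as delicate. Your induction does close (the key point being that $k^k$ eventually dominates $e^k\log k$), but it is a fair amount of work to verify the small cases and to confirm that the first-part constant $1+1/\log 3<2$ leaves enough room.

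For the second displayed bound the paper also takes a cleaner path: rather than bounding each $\delta_{U,dv,av}$ termwise and picking up an extra $d/\varphi(d)$ to absorb via $d/\varphi(d)\le(3/2)^{\omega(d)}\le y$, it lets $x\to\infty$ in the first inequality of \eqref{equ:sumadmuboundpi} (using Lemma~\ref{lem:piUnd}) to obtain $\big|\sum_{a\mid d}\mu(a)\,\delta_{U,dv,av}\big|\le\delta_{U,dv,v}\ll_U 1/(\varphi(d)v^2)$, from which the result follows directly after one more appeal to the tail estimate.
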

\begin{proof}
Let $\pi(m, r; x) := \#\{p \leq x : p \equiv r \pmod m\}$.
From~\eqref{equ:Rsinglesum} it follows that
\begin{equation}\label{equ:sumadmuboundpi}
\left|\sum_{a \,\mid\, d} \mu(a) \pi_{U, dv, av}(x)\right| \leq \pi_{U, dv, v}(x) \leq \pi(x; dv, \pm 1) .
\end{equation}
Moreover, letting $x \to +\infty$, Lemma~\ref{lem:piUnd} and the first inequality of~\eqref{equ:sumadmuboundpi} yield
\begin{equation}\label{equ:sumadmubounddelta}
\left|\sum_{a \,\mid\, d} \mu(a) \delta_{U, dv, av}\right| \leq \delta_{U,dv,v} .
\end{equation}
Now we have $M_d(x) := \#\{v \leq x : v \mid d^\infty\} \ll (\log x)^{\omega(d)}$, for every $x \geq 2$.
Hence, by partial summation and since $y \geq e^{\omega(d)}$, we obtain that
\begin{equation}\label{equ:1overv}
\sum_{\substack{\;\;\;v \,\mid\, d^\infty \\[1pt] v \,>\, y}} \frac1{v} = \left.\frac{M_d(t)}{t}\right|_{t\,=\,y}^{+\infty} + \int_y^{+\infty} \frac{M_d(t)}{t^2}\,\mathrm{d}t \ll \int_y^{+\infty} \frac{(\log t)^{\omega(d)}}{t^2}\,\mathrm{d}t \leq \frac{(\omega(d) + 1)(\log y)^{\omega(d)}}{y} .
\end{equation}
On the one hand, using the Brun--Titchmarsh inequality~\cite[Theorem~12.7]{MR2919246}
\begin{equation*}
\pi(m, r; x) \ll \frac{x}{\varphi(m)\log(x / m)},
\end{equation*}
holding for $x > m$, and~\eqref{equ:1overv} we get that
\begin{equation}\label{equ:hand1}
\sum_{\substack{\;\;\;v \,\mid\, d^\infty \\[1pt] \;\;\;v \,>\, y,\; dv \,\leq\, x^{2/3}}} \pi(dv, \pm 1; x) \ll \frac{x}{\varphi(d)\log x} \!\!\!\sum_{\substack{\;\;\;v \,\mid\, d^\infty \\[1pt] v \,>\, y}} \frac1{v} \ll \frac{x}{\log x} \cdot \frac{\omega(d) + 1}{\varphi(d)} \cdot \frac{(\log y)^{\omega(d)}}{y} .
\end{equation}
On the other hand, using the trivial bound $\pi(m, \pm 1; x) \ll x / m$, holding for $x \geq 1$, and~\eqref{equ:1overv} again, we find that
\begin{equation}\label{equ:hand2}
\sum_{\substack{\;\;\;v \,\mid\, d^\infty \\[1pt] \;\;\;dv \,>\, x^{2/3}}} \pi(dv, \pm 1; x) \ll \!\!\!\sum_{\substack{\;\;\;v \,\mid\, d^\infty \\[1pt] \;\;\;dv \,>\, x^{2/3}}} \frac{x}{dv} \leq \!\!\!\sum_{\substack{\;\;\;w \,\mid\, d^\infty \\[1pt] \;\;\;w \,>\, x^{2/3}}} \frac{x}{w} \ll x^{1/3} (\omega(d) + 1)(\log x)^{\omega(d)} .
\end{equation}
Putting together~\eqref{equ:hand1}, \eqref{equ:hand2}, and~\eqref{equ:sumadmuboundpi}, taking into account that $\omega(d) \leq \log y$ and $\varphi(d) y \leq \log x$, we obtain~\eqref{equ:firstbound}.
Finally, from~\eqref{equ:sumadmubounddelta}, \eqref{equ:defdeltaUnd}, and~\eqref{equ:1overv}, we get
\begin{equation*}
\sum_{\substack{\;\;\;v \,\mid\, d^\infty \\[1pt] v \,>\, y}} \sum_{a \,\mid\, d} \mu(a) \delta_{U, dv, av} \leq \sum_{\substack{\;\;\;v \,\mid\, d^\infty \\[1pt] v \,>\, y}} \delta_{U, dv, v}
\ll_U \frac1{\varphi(d)}\sum_{\substack{\;\;\;v \,\mid\, d^\infty \\[1pt] v \,>\, y}} \frac1{v^2} \ll \frac{\omega(d) + 1}{\varphi(d)} \cdot \frac{(\log y)^{\omega(d)}}{y} ,
\end{equation*}
as desired.
\end{proof}

\begin{lem}\label{lem:series}
Let $d$ be an odd positive integer with $3 \nmid d$ whenever $\Delta_0 = -3$.
Then
\begin{equation*}
\sum_{\;\;\;v \,\mid\, d^\infty} \sum_{a \,\mid\, d} \mu(a) \delta_{U, dv, av} = \delta_U(d) .
\end{equation*}
\end{lem}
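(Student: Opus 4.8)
The plan is to insert the closed form for $\delta_{U,dv,av}$ provided by Lemma~\ref{lem:piUnd} and collapse the double sum into an Euler product over the primes dividing $d$.

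First I would note that, since every $v \mid d^\infty$ is supported on the primes dividing $d$, the integers $dv$ and $d$ share the same set of prime factors; in particular $\varphi(dv) = v\,\varphi(d)$, and, because $\Delta_0$ is squarefree, the divisibility $\Delta_0 \mid dv$ holds for one (equivalently every) $v \mid d^\infty$ exactly when $\Delta_0 \mid d$. Consequently the case value in~\eqref{equ:defdeltaUnd} is a constant $\epsilon \in \{1,2\}$ independent of $v$, with $\epsilon = 2$ precisely when $\Delta < 0$, $\Delta_0 \equiv 1 \pmod 4$, and $\Delta_0 \mid d$. This turns the left-hand side into
\begin{equation*}
\sum_{v \,\mid\, d^\infty}\sum_{a \,\mid\, d}\mu(a)\,\delta_{U,dv,av} = \frac{\epsilon}{\varphi(d)}\sum_{v \,\mid\, d^\infty}\frac1{v^2}\sum_{a \,\mid\, d}\frac{\mu(a)\,(av, h)}{a}.
\end{equation*}

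Next, since $av$ is supported on the primes dividing $d$, one may replace $(av,h)$ by $(av, g)$ with $g := (d^\infty, h) = \prod_{p\mid d} p^{g_p}$, and factor the remaining sum as a product $\prod_{p \mid d} L_p$ of local contributions
\begin{equation*}
L_p = \sum_{w \,\geq\, 0} p^{-2w}\sum_{b \,\in\, \{0,1\}} (-1)^b\, p^{\min(b+w,\,g_p)-b}.
\end{equation*}
The crux of the argument is the evaluation of $L_p$: splitting off the geometric tails produced by the truncation $\min(b+w, g_p)$ and summing them, the dependence on $g_p$ collapses and one finds $L_p = p^{-g_p}\cdot \tfrac{p}{p+1}$ for every $g_p \geq 0$. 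Using $\tfrac{p}{p+1} = (1 - p^{-1})(1 - p^{-2})^{-1}$ and $\prod_{p\mid d}(1 - p^{-1}) = \varphi(d)/d$, multiplying over $p \mid d$ gives
\begin{equation*}
\sum_{v \,\mid\, d^\infty}\sum_{a \,\mid\, d}\mu(a)\,\delta_{U,dv,av} = \frac{\epsilon}{d\,(d^\infty, h)}\prod_{p \,\mid\, d}\left(1 - \frac1{p^2}\right)^{-1}.
\end{equation*}

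It remains to match this with $\delta_U(d)$. If $\epsilon = 1$ then we are exactly in the regime where $\eta_U(d) = 0$, and the two expressions coincide. If $\epsilon = 2$ then $\Delta < 0$, $\Delta_0 \equiv 1 \pmod 4$, and $\Delta_0 \mid d$; the last condition forces $(d, \Delta_0) = |\Delta_0|$, hence $\Delta_0/(d,\Delta_0) = -1$, so that $\big[(d^\infty,h), \Delta_0/(d,\Delta_0)\big] = (d^\infty,h)$ and $\eta_U(d) = (d^\infty,h)/(d^\infty,h)^2 = 1/(d^\infty,h)$; thus $\delta_U(d) = \tfrac{2}{d(d^\infty,h)}\prod_{p\mid d}(1-p^{-2})^{-1}$ and again both sides agree. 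The main obstacle is purely the local computation of $L_p$ and the careful handling of the truncation $\min(b+w, g_p)$; everything else is bookkeeping, and the hypotheses on $d$ serve only to guarantee that Lemma~\ref{lem:piUnd} applies to each pair $(dv, av)$, which it does since $dv$ is odd, divisible by $av$, and coprime to $3$ when $\Delta_0 = -3$.
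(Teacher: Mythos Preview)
Your argument is correct. Both you and the paper substitute the closed form for $\delta_{U,dv,av}$ and reduce the double sum to an Euler product over $p\mid d$, so the overall strategy is the same; the differences are in how the evaluation is carried out.

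The paper packages the sum as $S_{d,e,h}:=\sum_{v\mid d^\infty,\;e\mid v}\sum_{a\mid d}\mu(a)(av,h)/(\varphi(dv)av)$, cites Moree~\cite[Lemma~4]{MR2274151} for the value of $S_{d,1,h}$, and indicates that replacing $(h,d^\infty)$ by $[e,(h,d^\infty)]$ in the last step of that proof gives the general $S_{d,e,h}$; the answer is then $S_{d,1,h}$ or $S_{d,1,h}+S_{d,e,h}$ with $e=\Delta_0/(d,\Delta_0)$. You instead observe up front that, because $\Delta_0$ is squarefree, the condition $\Delta_0\mid dv$ is independent of $v\mid d^\infty$ (it holds for every $v$ or for none, according as $\Delta_0\mid d$ or not), so the case factor is a constant $\epsilon$ and only the single sum $S_{d,1,h}$ is needed; you then evaluate it directly via the local pieces $L_p=p^{-g_p}\cdot p/(p+1)$. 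Your route is a bit more self-contained (no appeal to~\cite{MR2274151}) and makes visible that the general $S_{d,e,h}$ apparatus is not actually required here, since in the ``otherwise'' case one always has $e=\Delta_0/(d,\Delta_0)=\pm1$ and hence $S_{d,e,h}=S_{d,1,h}$; the paper's formulation, on the other hand, explains the precise shape of $\eta_U(d)$ in the theorem statement via the quantity $[(d^\infty,h),e]$.
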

\begin{proof}
For every integer $e$ dividing $d^\infty$, define
\begin{equation*}
S_{d,e,h} := \sum_{\substack{\;\;\;v \,\mid\, d^\infty \\[1pt] e \,\mid\, v}} \sum_{a \,\mid\, d} \frac{\mu(a)(av, h)}{\varphi(dv)av} .
\end{equation*}
The value of $S_{d,1,h}$ was computed in~\cite[Lemma~4]{MR2274151} and a slight modification of the proof (precisely, replacing $(h, d^\infty)$ with $[e, (h, d^\infty)]$ in the last equation) yields
\begin{equation*}
S_{d,e,h} = \frac{(d^\infty, h)}{d\big[(d^\infty, h), e\big]^2} \prod_{p \,\mid\, d}\left(1 - \frac1{p^2}\right)^{-1} .
\end{equation*}
At this point, by \eqref{equ:defdeltaUnd} and considering that $\Delta_0 \mid dv$ if and only if $e \mid v$, where $e := \Delta_0 / (d, \Delta_0)$, we have
\begin{equation*}
\sum_{\;\;\;v \,\mid\, d^\infty} \sum_{a \,\mid\, d} \mu(a) \delta_{U, dv, av} = 
\begin{cases}S_{d, 1, h}  & \text{ if $\Delta > 0$ or $\Delta_0 \not\equiv 1 \!\!\!\pmod 4$ or $\Delta_0 \nmid d^\infty$}\\
S_{d, 1, h} + S_{d, e, h} & \text{ otherwise}
\end{cases}
= \delta_U(d) ,
\end{equation*}
as claimed.
\end{proof}

\begin{proof}[Proof of Theorem~\ref{thm:main}]
Let $A, B > 0$ be the constants of Lemma~\ref{lem:piUnd}.
Assume that $x \geq \exp\big(Be^{8\omega(d)}d^8\big)$ and put $y := (\log x / B)^{1/8} / d$.
Note that $e^{\omega(d)} \leq y \leq \log x / \varphi(d)$ and $\log y \leq \log \log x$, for every $x \gg_B 1$.
By~Lemma~\ref{lem:RUdx}, Lemma~\ref{lem:piUnd}, and Lemma~\ref{lem:series}, we obtain that
\begin{align*}
\mathcal{R}_U(d; x) &= \sum_{\substack{\;\;\;v \,\mid\, d^\infty \\[1pt] v \,\leq\, y}} \sum_{a \,\mid\, d} \mu(a) \pi_{U, dv, av}(x) + O(E_1) \\
&= \sum_{\substack{\;\;\;v \,\mid\, d^\infty \\[1pt] v \,\leq\, y}} \sum_{a \,\mid\, d} \mu(a) \delta_{U, dv, av} \Li(x) + O(E_1) + O_U(E_2) \\
&= \delta_U(d) \Li(x) + O(E_1) + O_U(E_2) + O(E_3) ,
\end{align*}
where, by Lemma~\ref{lem:bounds}, we have
\begin{equation*}
E_1 := \sum_{\substack{\;\;\;v \,\mid\, d^\infty \\[1pt] v \,>\, y}} \sum_{a \,\mid\, d} \mu(a) \pi_{U, dv, av}(x) \ll \frac{x}{\log x} \cdot \frac{\omega(d) + 1}{\varphi(d)} \cdot \frac{(\log y)^{\omega(d)}}{y} \ll \frac{(\omega(d) + 1) d}{\varphi(d)} \cdot \frac{x\,(\log \log x)^{\omega(d)}}{(\log x)^{9/8}}
\end{equation*}
and
\begin{equation*}
E_3 := \sum_{\substack{\;\;\;v \,\mid\, d^\infty \\[1pt] v \,>\, y}} \sum_{a \,\mid\, d} \mu(a) \delta_{U, dv, av} \Li(x) \ll_U \frac{\omega(d) + 1}{\varphi(d)} \cdot \frac{(\log y)^{\omega(d)}}{y} \cdot \Li(x) \ll \frac{(\omega(d) + 1) d}{\varphi(d)} \cdot \frac{x\,(\log \log x)^{\omega(d)}}{(\log x)^{9/8}} ,
\end{equation*}
while, also using the inequality $\tau(d) / d \leq d / \varphi(d)$, we have
\begin{align*}
E_2 &:= \sum_{\substack{\;\;\;v \,\mid\, d^\infty \\[1pt] v \,\leq\, y}} \sum_{a \,\mid\, d} x \exp\big({-A}(\log x)^{1/2} / (dv) \big) \ll x \exp\big({-A}B^{1/8}(\log x)^{3/8} \big) \tau(d) y \\
&\ll x \exp\big({-A}B^{1/8}(\log x)^{3/8} \big) (\log x)^{1/8} \cdot \frac{\tau(d)}{d} \ll \frac{d}{\varphi(d)} \cdot \frac{x}{(\log x)^{9/8}} .
\end{align*}
The result follows.
\end{proof}

\bibliographystyle{amsplain}

\end{document}